\numberwithin{equation}{section}
\newtheorem{theorem}{Theorem}[section]
\newtheorem{proposition}[theorem]{Proposition}
\newtheorem{lemma}[theorem]{Lemma}
\theoremstyle{definition}
\newtheorem{definition}[theorem]{Definition}
\theoremstyle{remark}
\newtheorem{remark}[theorem]{Remark}
\begin{document}

\title[ ]{Nonlinear parabolic equations with soft measure data}
\author{M. ABDELLAOUI AND E. AZROUL}
\address[MOHAMMED. ABDELLAOUI]{University of Fez, Faculty of Sciences Dhar El Mahraz, Laboratory LAMA, Department of Mathematics, B.P. 1796, Atlas Fez, Morocco.}
\email{mohammed.abdellaoui3@usmba.ac.ma}

\address[ELHOUSSINE. AZROUL]{University of Fez, Faculty of Sciences Dhar El Mahraz, Laboratory LAMA, Department of Mathematics, B.P. 1796, Atlas Fez, Morocco.}
\email{elhoussine.azroul@usmba.ac.ma}
\thanks{}

%
\subjclass{65J15, 28A12, 35B45, 35A35, 35Q35}
\keywords{Porous media equation, parabolic $p-$capacity, renormalized solution, a priori-estimates, equidiffuse measure}
\date{July 19, 2018.} 

\begin{abstract} 
In this paper we prove existence and uniqueness results for nonlinear parabolic problems with Dirichlet boundary values whose model is  

\[
\left\{
\begin{aligned}
&b(u)_t-\Delta_{p}u=\mu\;\mbox{in }(0,T)\times\Omega,\\
&b(u(0,x))=b(u_{0})\;\mbox{in }\Omega,\\
&u(t,x)=0\;\mbox{on }(0,T)\times\partial\Omega.
\end{aligned}
\right.
\]
where $\Delta_{p}u=\text{div}(|\nabla u|^{p-2}\nabla u)$ is the usual $p-$Laplace operator, $b$ is a increasing $C^{1}-$function and $\mu$ is a finite measure which does not charge sets of zero parabolic $p-$capacity, and we discuss their main properties.
\end{abstract} 

\maketitle  
\section{Introduction}\label{Sect. 1}   
Let $\Omega$ be an open bounded subset of $\mathbb{R}^{N}$ $(N\geq 2)$, $T$ is a positive real number, $p>1$, and let us consider the model problem
\begin{equation}\label{Eq 1.1} 
\left\{ 
\begin{aligned}   
&\frac{\partial b(u)}{\partial t}-\Delta_{p}u=\mu \quad\mbox{in }(0,T)\times\Omega\\   
&b(u)=b(u_{0}) \quad\mbox{on }\lbrace 0\rbrace\times\Omega\\   
&u=0 \quad\mbox{on }(0,T)\times\partial\Omega,   
\end{aligned} 
\right.  
\end{equation} 
where $u_{0}$ is a measurable function such that $b(u_{0})\in L^{1}(\Omega)$ and $\mu$ is a bounded Radon measure on $Q=(0,T)\times\Omega$.\\
It is well known that, if $b(u)=u$, $\mu\in L^{p'}(Q)$ and $u_{0}\in L^{2}(\Omega)$, J.-L. Lions \cite{L} proved existence and uniqueness of a weak solution. Under the general assumptions that $\mu$ and $u_{0}$ are bounded measures, the existence of a distributional solution was proved in \cite{BDGO}, by approximating $(\ref{Eq 1.1})$ with problems having regular data and using compactness arguments, due to the lack of regularity of the solutions, the distributional formulation is not strong enought to provide uniqueness, as it can be proved by adapting the conterexemple of J. Serrin to the parabolic case. However, for nonlinear operators with $L^{1}-$data, a new concept of solutions was done in \cite{BM} and in \cite{Pr} (see also \cite{B6}), where the notions of renormalized solution, and entropy solution, respectively, were introduced. If $\mu$ is a measure that does not charge sets of zero parabolic $p-$capacity (the so called diffuse measures), the notion of renormalized solution was introduced in \cite{DPP}. In \cite{DP} a similar notion of entropy solution is also defined, and proved to be equivalent to the renormalized one. The case in which $b$ is a strictly increasing $C^{1}-$function and $\Delta_{p}$ is a $p-$Laplace operator (i.e. $(\ref{Eq 1.1})$) was faced in \cite{BPR} if $\mu$ is a diffuse measure (see also \cite{Pe1} when $\mu$ is general). All these latest results are strongly based on a decomposition theorem given in \cite{DPP}, the key point in the existence result being the proof of the strong compactness of suitable truncations of the approximating solutions in the energy space.\\
Recently, in \cite{PPP1} (see also \cite{PPP2}) the authors proposed a new approach to the same problem with diffuse measures as data. This approach avoids to use the particular structure of the decomposition of the measure and it seems more flexible to handle a fairly general class of problems. In order to do that, the authors introduced a definition of renormalized solution which is closer to the one used for conservation laws in \cite{BCW} and to one of the existing formulations in the elliptic case (see \cite{DM} and \cite{DMOP}). Our goal is to extend the approach in \cite{PPP2} to the framework of the so-called generalized porous medium equation of the type $v_{t}-\Delta_{p}\psi(v)$ with $\psi(v)=u$ and $\psi^{-1}=b$, $\psi$ is a strictly increasing function.\\

The paper is organized as follows. In sect. \ref{Sect. 2} we give some preliminaries on the notion of parabolic $p-$capacity and on the functional spaces and some basic notations and properties. Sect. \ref{Sect. 3} is devoted to set the main assumptions and the new renormalized formulation of problem $(\ref{Eq 1.1})$. In sect. \ref{Sect. 4}, we prove that the definition of renormalized solution does not depend on the classical decomposition of $\mu$. In sect. \ref{Sect. 5} we give the proof of the main result (Theorem 5.1). We will briefly sketch in Sect. \ref{Sect. 6} the proof of the uniqueness of the solution.

\section{Preliminaries on parabolic capacity}\label{Sect. 2} 
Given a bounded open set $\Omega\subset\mathbb{R}^{N}$ and $T>0$, let $Q=(0,T)\times\Omega$. We recall that for every $p>1$ and every open subset $U\subset Q$, the parabolic $p-$capacity of $U$  (see \cite{DPP},\cite{P} and \cite{PPP1}) is given by
\begin{equation}\label{Eq 2.1}
\text{cap}_{p}(U)=\text{inf}\lbrace \Vert u\Vert_{W}:u\in W, u\geq\chi_{U}\text{ a.e. in }Q\rbrace,
\end{equation}
where 
\begin{equation}\label{Eq 2.2}
W=\lbrace u\in L^{p}(0,T;V): u_{t}\in L^{p'}(0,T;V')\rbrace
\end{equation}
Let us recall that $V=W^{1,p}_{0}(\Omega)\cap L^{2}(\Omega)$ endowed with its natural norm $\Vert\cdot\Vert_{W^{1,p}_{0}(\Omega)}+\Vert\cdot\Vert_{L^{2}(\Omega)}$ and $V'$ is its dual space. As usual $W$ is endowed with the norm
\[ \Vert u\Vert_{W}=\Vert u\Vert_{L^{p}(0,T;V)}+\Vert u_{t}\Vert_{L^{p'}(0,T;V')}\]
As usual we set $\text{inf }\emptyset=+\infty$. The parabolic capacity $\text{cap}_{p}$ is then extended to arbitrary Borel subset $B$ of $Q$ as
\[ \text{cap}_{p}(B)=\text{inf}\lbrace \text{cap}_{p}(U): B\subset U\text{ and }U\subset Q\text{ is open}\rbrace.\]
We denote by $\mathcal{M}_{b}(Q)$ the set of all Radon measures with bounded variation on $Q$ equipped with the norm $\Vert \mu\Vert_{\mathcal{M}_{b}(Q)}=|\mu|(Q)$.\\
We call a measure $\mu$ diffuse if $\mu(E)=0$ for every Borel set $E\subset Q$ such that $\text{cap}_{p}(E)=0$, $\mathcal{M}_{0}(Q)$ will denote the subspace of all diffuse measures in $Q$.\\
Difuse measures play an important role in the study of boundary value problems with measures as source terms. Indeed, for such measures one expects to obtain conterparts (in some generalized framework) of existence and uniqueness results known in the variational setting. Properties of diffuse measures in connection with the resolution of nonlinear parabolic problems have been investigated in \cite{DPP}. In that paper, the authors proved that for every $\mu\in\mathcal{M}_{0}(Q)$, there exists $f\in L^{1}(Q)$, $g\in L^{p}(0,T;V)$ and $\chi\in L^{p'}(0,T;W^{-1,p'}(\Omega))$ such that 
\begin{equation}\label{Eq 2.3}
 \mu=f+g+\chi\text{   in   }\mathcal{D}'(Q)
\end{equation}
Note that the decomposition in $(\ref{Eq 2.3})$ is not uniquely determined and the presence of the term $g_{t}$ is essentially due to the presence of diffuse measures which charges sections of the parabolic cylinder $Q$ and gives some extra difficulties in the study of this type of problems; in particular the parabolic case with absorption term $h(u)$. The main reason is that a solution of 
\[ u_{t}-\Delta_{p}u+h(u)=\mu=f+\chi+g\text{   in   }Q\] is meant in the sens that $v=u-g$ satisfies
\[ v_{t}-\Delta_{p}(v+g)+h(v+g)=f+\chi\text{   in   }Q.\]
However, since no growth restriction is made on $h$, the proof is a hard technical issue if $g$ is not bounded. For further considerations on this fact we refer to \cite{BP} (see also \cite{BMP}, \cite{PPP1}) and references therein.\\
In \cite{PPP1}, the authors also proved the following approximation theorem for an arbitrary diffuse measure that is essentially independent on the decomposition of the measure data.
\begin{theorem}
Let $\mu\in\mathcal{M}_{0}(Q)$. Then, for every $\epsilon>0$, there exists $\nu\in\mathcal{M}_{0}(Q)$ such that 
\begin{equation}\label{Eq 2.4}
\Vert \mu-\nu\Vert_{\mathcal{M}(Q)}\leq\epsilon \text{   and   }\nu=w_{t}-\Delta_{p}w\text{   in   }\mathcal{D}'(Q) 
\end{equation}
where $w\in L^{p}(0,T;W^{1,p}_{0}(\Omega))\cap L^{\infty}(Q)$.
\end{theorem}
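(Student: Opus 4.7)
The plan is to construct $\nu$ as the source of a parabolic Cauchy--Dirichlet problem whose solution is simultaneously in the energy space $L^{p}(0,T;W^{1,p}_{0}(\Omega))$ and bounded. First I would apply the decomposition theorem of \cite{DPP} (recalled in (\ref{Eq 2.3})) to represent
\[
\mu = f + g_{t} - \mathrm{div}(G) \quad \mbox{in } \mathcal{D}'(Q),
\]
with $f\in L^{1}(Q)$, $g\in L^{p}(0,T;V)$ and $G\in (L^{p'}(Q))^{N}$, the functional part $\chi$ being written as the divergence of an $L^{p'}$ field.

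Next I would approximate the $L^{1}$ component by a bounded one: pick $f_{\delta}\in L^{\infty}(Q)$ with $\Vert f-f_{\delta}\Vert_{L^{1}(Q)}\leq\epsilon$ and set $\mu_{\delta}:=f_{\delta}+g_{t}-\mathrm{div}(G)$. Since $\mu_{\delta}-\mu = f_{\delta}-f$ belongs to $L^{1}(Q)$, it is automatically a diffuse measure, so $\mu_{\delta}\in\mathcal{M}_{0}(Q)$ and $\Vert\mu-\mu_{\delta}\Vert_{\mathcal{M}(Q)}\leq\epsilon$. If required (see the obstacle below), I would further truncate $g$ and $G$ and absorb the small distributional discrepancy back into the $L^{1}$ correction $f_{\delta}$.

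Then I would solve
\[
w_{t}-\Delta_{p}w=\mu_{\delta} \quad\mbox{in } Q, \qquad w(0,\cdot)=0, \qquad w|_{(0,T)\times\partial\Omega}=0.
\]
Performing the shift $\tilde{w}=w-g$, the equation becomes $\tilde{w}_{t}-\Delta_{p}(\tilde{w}+g) = f_{\delta}-\mathrm{div}(G)$, whose right-hand side lies in $L^{\infty}(Q)+L^{p'}(0,T;W^{-1,p'}(\Omega))$. The variational theory of Lions for pseudo-monotone operators then yields a unique weak solution $\tilde{w}\in L^{p}(0,T;V)$ with $\tilde{w}_{t}\in L^{p'}(0,T;V')$. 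Returning to $w=\tilde{w}+g\in L^{p}(0,T;W^{1,p}_{0}(\Omega))$ and setting $\nu:=w_{t}-\Delta_{p}w = \mu_{\delta}$ produces a measure in $\mathcal{M}_{0}(Q)$ with the required closeness to $\mu$.

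The main technical obstacle is the $L^{\infty}(Q)$ bound on $w$: since $g$ is only in $L^{p}(V)$, the reconstruction $w=\tilde{w}+g$ is not obviously bounded. To overcome this I would refine the second step and truncate $g$, $G$ themselves, e.g. replace $g$ by $T_{k}(g)\in L^{p}(V)\cap L^{\infty}(Q)$ and $G$ by a bounded truncation, while modifying $f_{\delta}$ so that the full difference $\mu-\nu$ remains an $L^{1}$ function whose norm is below $\epsilon$. With all components of the source bounded, a Stampacchia/De Giorgi truncation on the equation for $\tilde{w}$, combined with the monotonicity of $-\Delta_{p}$ and the basic parabolic energy identity, produces the desired $L^{\infty}$ estimate and closes the argument.
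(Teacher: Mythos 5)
There is a genuine gap, and it sits exactly where you flagged it. Your plan truncates the \emph{data}: you replace $f$ by $f_{\delta}$, and then propose to replace $g$ by $T_{k}(g)$ and $G$ by a bounded truncation, ``absorbing the small distributional discrepancy back into the $L^{1}$ correction.'' This cannot work. The discrepancies you create are $(g-T_{k}(g))_{t}$ and $-\mathrm{div}\,(G-G_{\delta})$, which are genuine distributions: neither is an $L^{1}$ function, neither is even a Radon measure in general, so neither can be absorbed into $f_{\delta}$ nor estimated in $\Vert\cdot\Vert_{\mathcal{M}(Q)}$ --- and the quantity you must make small is precisely $\Vert\mu-\nu\Vert_{\mathcal{M}(Q)}$. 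If instead you do \emph{not} truncate $g$ and $G$, then $w=\tilde{w}+g$ inherits the unboundedness of $g\in L^{p}(0,T;V)$, and even $\tilde{w}$ need not be bounded, since a source $-\mathrm{div}\,(G)$ with $G$ merely in $(L^{p'}(Q))^{N}$ is below the Aronson--Serrin integrability threshold for $L^{\infty}$ estimates. So the two requirements of the theorem (boundedness of $w$ and smallness of $\mu-\nu$ in the total variation norm) pull your construction in opposite directions, and the proposed compromise does not close.

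The paper does not reprove this statement --- it quotes it from \cite{PPP1} --- but it does tell you the right mechanism: ``$w$ is constructed as the truncation of a nonlinear potential of $\mu$.'' One truncates the \emph{solution}, not the data. Let $u$ solve $u_{t}-\Delta_{p}u=\mu$ (in the renormalized sense) and set $w=T_{k}(u)$, which is automatically in $L^{p}(0,T;W^{1,p}_{0}(\Omega))\cap L^{\infty}(Q)$. Then $\nu_{k}:=w_{t}-\Delta_{p}w$ differs from $\mu$ by $-\mu\chi_{\lbrace|u|>k\rbrace}$ plus a concentration term $\lambda_{k}$ carried by the level set $\lbrace|u|=k\rbrace$, whose total mass is controlled by $|\mu|(\lbrace|u|>k\rbrace)$ plus a tail of the initial datum (this is precisely the computation carried out in Steps 2--4 of the proof of Theorem \ref{Thm 5.1}). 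Smallness as $k\to\infty$ then follows from the capacity decay of Proposition \ref{Prop 2.2}, i.e. $\text{cap}_{p}(\lbrace|u|>k\rbrace)\to 0$, combined with the fact that $\mu$ is diffuse. Note that this is where the hypothesis $\mu\in\mathcal{M}_{0}(Q)$ is actually used; in your argument it only enters to assert that $\mu_{\delta}$ is diffuse, which is a sign that the diffuseness is not doing the work it must do.
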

Note that the function $w$ is constructed as the truncation of a nonlinear potential of $\mu$.\\

We will argue by density for proving the existence of a solution, so that we need the following preliminary result whose proof can be found, for instance, in \cite{PPP1} (see also Appendix).
\begin{proposition}\label{Prop 2.2}
Given $\mu\in\mathcal{M}(Q)\cap L^{p'}(0,T;W^{-1,p'}(\Omega))$ and $u_{0}\in L^{2}(\Omega)$, let $u\in W$ be the (unique) weak solution of 
\begin{equation}\label{Eq 2.5}
\left\{  
\begin{aligned}   
&u_{t}-\Delta_{p}u=\mu \quad\mbox{in }Q\\   
&u=u_{0} \quad\mbox{on }\lbrace 0\rbrace\times\Omega\\   
&u=0 \quad\mbox{on }(0,T)\times\partial\Omega,   
\end{aligned} 
\right. 
\end{equation}
Then,
\begin{equation}\label{Eq 2.6}
 \text{cap}_{p}(\lbrace |u|>k\rbrace)\leq C\text{max}\lbrace \frac{1}{k^{\frac{1}{p}}},\frac{1}{k^{\frac{1}{p'}}}\rbrace\quad\forall k\geq 1,
 \end{equation}
where $C>0$ is a constant depending on $\Vert \mu\Vert_{\mathcal{M}(Q)}$, $\Vert u_{0}\Vert_{L^{1}(\Omega)}$, and $p$.
\end{proposition}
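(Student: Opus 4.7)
My strategy is to exhibit, for each $k\ge 1$, a non-negative $w_k\in W$ with $w_k\ge\chi_{\{|u|>k\}}$ a.e.\ in $Q$ and $\|w_k\|_W\le C\max\{k^{-1/p},k^{-1/p'}\}$. The natural candidate is $w_k:=|T_k(u)|/k=T_k(|u|)/k$, which lies in $W$ (since $u\in W$ and $T_k(|\cdot|)$ is Lipschitz with $T_k(0)=0$), takes values in $[0,1]$, and equals $1$ on $\{|u|>k\}$. The admissibility of $w_k$ in the definition \eqref{Eq 2.1} of $\text{cap}_p(\{|u|>k\})$ is then immediate, and everything reduces to an estimate of $\|w_k\|_W$.

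First I derive the standard parabolic energy estimate. Testing $(\ref{Eq 2.5})$ with $T_k(u)\in L^p(0,T;W^{1,p}_0(\Omega))\cap L^\infty(Q)$ and integrating the time derivative by parts yields
\[
\int_\Omega\Theta_k(u(t))\,dx+\int_0^t\!\!\int_\Omega|\nabla T_k(u)|^p\,dxds=\int_\Omega\Theta_k(u_0)\,dx+\int_0^t\langle\mu,T_k(u)\rangle\,ds,
\]
where $\Theta_k(r)=\int_0^r T_k(s)\,ds\ge 0$. Using $\Theta_k(u_0)\le k|u_0|$ together with $|\langle\mu,T_k(u)\rangle|\le k\|\mu\|_{\mathcal{M}(Q)}$ gives $\|T_k(u)\|^2_{L^\infty(0,T;L^2(\Omega))}+\|\nabla T_k(u)\|^p_{L^p(Q)}\le Ck$, with $C=C(\|\mu\|_{\mathcal{M}(Q)},\|u_0\|_{L^1(\Omega)},p)$. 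The spatial norm of $w_k$ now follows at once: Poincar\'e produces $\|w_k\|_{L^p(0,T;W^{1,p}_0)}\le\frac{C}{k}\|\nabla T_k(u)\|_{L^p(Q)}\le Ck^{-1/p'}$, while the uniform-in-time $L^2$ bound gives $\|w_k\|_{L^p(0,T;L^2)}\le\frac{T^{1/p}}{k}\|T_k(u)\|_{L^\infty(0,T;L^2)}\le Ck^{-1/2}$. Since $1/2$ always lies between $1/p$ and $1/p'$, one has $k^{-1/2}\le\max\{k^{-1/p},k^{-1/p'}\}$ for $k\ge 1$, whence $\|w_k\|_{L^p(0,T;V)}\le C\max\{k^{-1/p},k^{-1/p'}\}$.

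For the time derivative I would use the equation $u_t=\Delta_p u+\mu$ in $V'$ together with a chain rule for the Lipschitz truncation, obtained by regularising $T_k$ through $C^1$ cutoffs $T_k^\varepsilon$ and passing to the limit: this yields, for any $\phi\in L^p(0,T;V)$,
\[
\int_0^T\langle(T_k(u))_t,\phi\rangle\,dt=-\int_Q|\nabla T_k(u)|^{p-2}\nabla T_k(u)\cdot\nabla\phi\,dxdt+\int_Q\chi_{\{|u|<k\}}\phi\,d\mu.
\]
H\"older's inequality on the first integral and the duality between $\mu\in\mathcal{M}(Q)\cap L^{p'}(0,T;W^{-1,p'}(\Omega))$ and $L^p(0,T;V)$ on the second produce $\|(T_k(u))_t\|_{L^{p'}(0,T;V')}\le C\|\nabla T_k(u)\|_{L^p(Q)}^{p-1}+C\le Ck^{1/p'}+C$, and consequently $\|(w_k)_t\|_{L^{p'}(0,T;V')}\le Ck^{-1/p}$. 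Summing the two contributions gives $\|w_k\|_W\le C\max\{k^{-1/p},k^{-1/p'}\}$, and $(\ref{Eq 2.6})$ then follows from the very definition of the parabolic $p$-capacity. The main obstacle is precisely this last step: the rigorous chain rule for $T_k(u)$ in $W$ and the estimate of its time derivative in $V'$; it requires a careful smooth approximation of the truncation and uses both structures of $\mu$ (bounded Radon measure, and element of $L^{p'}(0,T;W^{-1,p'})$) in an essential way, since neither property alone would give a bound compatible with the $L^p(0,T;V)$ test space.
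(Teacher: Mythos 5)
Your reduction to exhibiting an admissible competitor $w_k\in W$ with $w_k\ge\chi_{\{|u|>k\}}$, and your Step~1 energy estimates ($\|T_k(u)\|^2_{L^\infty(0,T;L^2)}+\|\nabla T_k(u)\|^p_{L^p(Q)}\le CkM$), coincide with the first step of the paper's proof in the Appendix. The gap is exactly where you locate it, but it is not a technicality that a ``careful smooth approximation'' closes: the chain-rule identity you write for $(T_k(u))_t$ is false. Regularising $T_k$ by the cutoffs $S_{k,\eta}$ of \eqref{Eq 5.14} and letting $\eta\to 0$ produces an extra term, namely the weak-$*$ limit $\lambda_k$ of $\frac{1}{\eta}\,|\nabla u|^{p}\chi_{\{k<|u|<k+\eta\}}$, a nonnegative bounded Radon measure concentrated on the level set $\{|u|=k\}$ whose total mass is controlled only by $\|\mu\|_{\mathcal{M}(Q)}+\|u_{0}\|_{L^{1}(\Omega)}$ and does \emph{not} vanish for fixed $k$. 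The correct identity is
\begin{equation*}
(T_k(u))_t-\operatorname{div}\bigl(|\nabla T_k(u)|^{p-2}\nabla T_k(u)\bigr)=\chi_{\{|u|<k\}}\,\mu+\lambda_k\quad\text{in }\mathcal{D}'(Q),
\end{equation*}
and since $\lambda_k$ is in general a genuinely singular measure, $(T_k(u))_t$ lies only in $L^{p'}(0,T;W^{-1,p'}(\Omega))+\mathcal{M}(Q)$, not in $L^{p'}(0,T;V')$. Consequently $T_k(u)$ (hence your $w_k$) need not belong to $W$ at all, and it is not admissible in the infimum \eqref{Eq 2.1}; the estimate $\|(T_k(u))_t\|_{L^{p'}(0,T;V')}\le Ck^{1/p'}+C$ cannot be derived along these lines.

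The paper circumvents this obstruction by a comparison argument (Steps~2--3 of the Appendix). Assuming first $\mu\ge 0$ (so $u\ge 0$), one solves the backward problem $-z_t-\Delta_p z=-2\Delta_p T_k(u)$ with $z=T_k(u)$ at $t=T$, whose right-hand side genuinely belongs to $L^{p'}(0,T;W^{-1,p'}(\Omega))$ with norm of order $(kM)^{1/p'}$ by your own Step~1 bound; hence $z\in W$ with $\|z\|_W\le C\max\{(kM)^{1/p},(kM)^{1/p'}\}$. One then uses the sign of $\lambda_k$ (equivalently, the concavity of $T_k$ on $[0,\infty)$) to see that $T_k(u)$ is a subsolution, so that $-z_t-\Delta_p z\ge -(T_k(u))_t-\Delta_p T_k(u)$, and testing with $(z-T_k(u))^-$ gives $z\ge T_k(u)$ a.e.; in particular $z/k\ge 1$ on $\{u>k\}$, and $z/k$ is the admissible function you were looking for. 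To repair your argument you must replace $w_k=T_k(|u|)/k$ by $z/k$ (treating $\mu\le 0$ symmetrically and splitting a signed $\mu$ into its parts); without this detour through the dual problem the time-derivative half of the $W$-norm is out of reach.
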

Note that the proof of the corresponding Proposition in our case is postponed to the Appendix in Sect. \ref{Sect. 7}.
\begin{definition}\label{Def 2.3}
A sequence of measures $(\mu_{n})$ in $Q$ is equidiffuse, if for every $\eta>0$ there exists $\delta>0$ such that
\[ \text{cap}_{p}(E)<\delta\Longrightarrow|\mu_{n}|(E)<\eta\quad\forall n\geq 1.\]
The following result is proved in \cite{PPP2}.
\begin{lemma}\label{Lem 2.4}
Let $\rho_{n}$ be a sequence of mollifiers on $Q$. If $\mu\in\mathcal{M}_{0}(Q)$, then the sequence $(\rho_{n}*\mu_{n})$ is equidiffuse.
\end{lemma}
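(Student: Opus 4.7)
The proof plan combines the approximation theorem with a capacitary duality argument in $W$, executed in three steps, with the sign of $\rho_n*\nu$ as the main obstacle.

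First, fix $\eta>0$ and apply Theorem 2.1 with $\epsilon=\eta/2$ to obtain $\nu\in\mathcal{M}_0(Q)$ with $\|\mu-\nu\|_{\mathcal{M}(Q)}<\eta/2$ and $\nu=w_t-\Delta_p w$ in $\mathcal{D}'(Q)$, for some $w\in L^p(0,T;W^{1,p}_0(\Omega))\cap L^\infty(Q)$. Since $\rho_n\ge 0$ and $\|\rho_n\|_{L^1}=1$, Young's inequality gives $\|\rho_n*(\mu-\nu)\|_{L^1(Q)}\le\|\mu-\nu\|_{\mathcal{M}(Q)}<\eta/2$, hence $|\rho_n*\mu|(E)\le\eta/2+|\rho_n*\nu|(E)$ for every Borel $E\subset Q$ and every $n$, reducing the task to finding $\delta>0$, depending only on $\eta$ and $w$, for which $\text{cap}_p(E)<\delta$ forces $|\rho_n*\nu|(E)<\eta/2$ uniformly in $n$.

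Second, I would derive a uniform $W$-duality estimate for $\rho_n*\nu$. Commuting the symmetric mollifier with weak derivatives, for $\psi\in W$ supported away from the parabolic boundary,
\[
\int_Q\psi\,(\rho_n*\nu)\,dz=\int_Q(\rho_n*\psi)\,d\nu=-\int_Q(\rho_n*w)\,\psi_t\,dz+\int_Q\rho_n*(|\nabla w|^{p-2}\nabla w)\cdot\nabla\psi\,dz,
\]
and Young's convolution inequality, together with the $n$-independent bounds $\|\rho_n*w\|_{L^p(0,T;V)}\le\|w\|_{L^p(0,T;V)}$ and $\|\rho_n*(|\nabla w|^{p-2}\nabla w)\|_{L^{p'}(Q)}\le\|\nabla w\|_{L^p(Q)}^{p-1}$, yields a constant $C(w)>0$ with
\[
\Big|\int_Q\psi\,(\rho_n*\nu)\,dz\Big|\le C(w)\,\|\psi\|_W \qquad \forall\,n\ge 1.
\]

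Third, given $E\subset Q$ Borel, I use outer regularity of $\text{cap}_p$ to select an open $U\supset E$ with $\text{cap}_p(U)\le 2\,\text{cap}_p(E)$ and a truncated quasi-continuous capacitary potential $\phi_U\in W\cap L^\infty(Q)$ with $0\le\phi_U\le 1$, $\phi_U\ge\chi_U$ a.e., and $\|\phi_U\|_W\le 2\,\text{cap}_p(U)$. The pointwise bound $|\rho_n*\nu|\le\rho_n*|\nu|$ (from $\rho_n\ge 0$) and Fubini give $|\rho_n*\nu|(E)\le\int_Q(\rho_n*\phi_U)\,d|\nu|$. Writing $|\nu|=\nu^++\nu^-$ via Hahn (each part is again diffuse since $\nu$ is) and applying Theorem 2.1 to each $\nu^\pm$ separately to restore the framework of the second step up to an arbitrarily small $\mathcal{M}$-error, one obtains $\int_Q(\rho_n*\phi_U)\,d|\nu|\le C'(w)\,\|\rho_n*\phi_U\|_W\le C'(w)\,\|\phi_U\|_W\le 4\,C'(w)\,\text{cap}_p(E)$; choosing $\delta$ of the order $\eta/C'(w)$ concludes. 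The principal obstacle is precisely this sign issue: $\text{sign}(\rho_n*\nu)$ is not admissible as a test function in $W$, so the total variation cannot be estimated by testing directly. The pointwise comparison with $\rho_n*|\nu|$ together with the Hahn splitting circumvents it, but at the cost of a second appeal to Theorem 2.1 on each signed component, and ensuring that the associated errors remain uniform in $n$ is the delicate part of the argument.
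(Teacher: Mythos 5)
The paper itself offers no proof of Lemma \ref{Lem 2.4}: it defers entirely to \cite{PPP2}, so your attempt can only be measured against the argument given there, whose overall architecture you have in fact reconstructed correctly (reduce, via Theorem 2.1, to measures of the form $w_{t}-\Delta_{p}w$ with $w$ bounded; transpose the mollifier onto a capacitary test function by Fubini; conclude by duality in $W$). The pointwise bound $|\rho_{n}*\nu|\leq\rho_{n}*|\nu|$ followed by transposition is the right way around the sign obstruction, and you correctly see that a second appeal to Theorem 2.1, applied to $\nu^{\pm}$, is then required; note that this makes your first application of Theorem 2.1 to $\mu$ redundant, since you could apply it to $\mu^{\pm}$ directly.

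Two steps are nevertheless genuinely incomplete. First, you invoke a potential $\phi_{U}\in W$ with $0\leq\phi_{U}\leq 1$, $\phi_{U}\geq\chi_{U}$ and $\Vert\phi_{U}\Vert_{W}\leq 2\,\text{cap}_{p}(U)$. This does not follow from $(\ref{Eq 2.1})$: if $u\in W$ is a near-minimizer, its truncation $T_{1}(u^{+})$ need not belong to $W$, because $(T_{1}(u^{+}))_{t}$ is formally the product of an $L^{\infty}$ function with an element of $L^{p'}(0,T;V')$ and is not controlled in that space. The existence of bounded potentials with comparable $W$-norm is a separate nontrivial lemma (essentially from \cite{DPP}), and without the bound $\phi_{U}\leq 1$ your error term $\Vert\nu^{\pm}-\sigma^{\pm}\Vert_{\mathcal{M}(Q)}\Vert\check\rho_{n}*\phi_{U}\Vert_{\infty}$ is not controlled. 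Second, your duality estimate in Step 2 is derived for $\psi\in W$ ``supported away from the parabolic boundary'', but in Step 3 it is applied to $\psi=\check\rho_{n}*\phi_{U}$, which is not of that form: the convolution destroys the zero trace on $(0,T)\times\partial\Omega$, so $\check\rho_{n}*\phi_{U}\notin W$ and the inequality $\Vert\check\rho_{n}*\phi_{U}\Vert_{W}\leq\Vert\phi_{U}\Vert_{W}$ is ill-posed; moreover the identity $\sigma^{\pm}=(w^{\pm})_{t}-\Delta_{p}w^{\pm}$ holds only in $\mathcal{D}'(Q)$, and extending the pairing to such $\psi$ produces boundary terms $\int_{\Omega}w^{\pm}\psi\,dx$ at $t=0$ and $t=T$ that are not obviously small --- one needs the embedding $W\hookrightarrow C([0,T];L^{2}(\Omega))$ to bound them by $C\Vert w^{\pm}\Vert_{\infty}\Vert\phi_{U}\Vert_{W}$, together with a cut-off argument near $\partial\Omega$ for the spatial term. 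These are precisely the points where the proof in \cite{PPP2} does real work, and as written your argument does not close them.
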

\end{definition}
\begin{flushleft}
Here are some notations we will use throughout the paper.
\end{flushleft}

We consider a sequence of mollifiers $(\rho_{n})$ such that for any $n\geq 1$,
\begin{equation}\label{Eq 2.7}
\rho_{n}\in C^{\infty}_{c}(\mathbb{R}^{N+1}),\; \text{Supp }\rho_{n}\subset B_{\frac{1}{n}}(0),\;\rho_{n}\geq 0 \text{ and }\int_{\mathbb{R}^{N+1}}\rho_{n}=1.
\end{equation}
Given $\mu\in\mathcal{M}(Q)$, we define $\mu_{n}$ as a convolution $\rho_{n}*\mu$ for every $(t,x)\in \mathbb{R}\times\mathbb{R}^{N}$ by 
\begin{equation}\label{Eq 2.8}
\mu_{n}(t,x)=\rho_{n}*\mu(t,x)=\int_{Q}\rho_{n}(t-s,x-y) d\mu(s,y).
\end{equation}
 For any nonnegative real number, we denote by $T_{k}(r)=\text{min}(k,\text{max}(r,-k))$ the truncation function at level $k$. For every $r\in\mathbb{R}$, let $\overline{T}_{k}(z)=\int_{0}^{z}T_{k}(s)ds$. Finally by $\langle\cdot,\cdot\rangle$ we mean the duality between suitable spaces in which functions are involved. In particular we will consider both duality between $W^{1,p}_{0}(\Omega)$ and $W^{-1,p'}(\Omega)$ and the duality between $W^{1,p}_{0}(\Omega)\cap L^{\infty}(\Omega)$ and $W^{-1,p'}(\Omega)+L^{1}(Q)$, and we denote by $\omega(h,n,\delta,\cdots)$ any quantity that vanishes as the parameters go to their limit point.
\section{Main assumptions and renormalized formulation}\label{Sect. 3}
\begin{flushleft}
Let us state our basic assumptions. Let $\Omega$ be a bounded, open subset of $\mathbb{R}^{N}$, $T$ a positive number and $Q=(0,T)\times\Omega$, we will actually consider a larger class of problems involving Leray-Lions type operators of the form $-\text{div}(a(t,x,\nabla u))$ (the same argument as above still holds for more general nonlinear operators (see \cite{BMR})), and the nonlinear parabolic problem
  \begin{equation}\label{Eq 3.1}
\left\{  
\begin{aligned}   
& b(u)_{t}-\text{div}(a(t,x,\nabla u))=\mu \quad\mbox{in }(0,T)\times\Omega,\\   
&b(u)=b(u_{0}) \quad\mbox{on }\lbrace 0\rbrace\times\Omega,\\   
&u=0 \quad\mbox{on }(0,T)\times\partial\Omega,   
\end{aligned} 
\right.  
\end{equation}
where $a:(0,T)\times\Omega\times\mathbb{R}^{N}\rightarrow\mathbb{R}^{N}$ be a Carath\'eodory function (i.e., $a(\cdot,\cdot,\zeta)$ is measurable on $Q$ for every $\zeta$ in $\mathbb{R}^{N}$, and $a(t,x,\cdot)$ is continuous on $\mathbb{R}^{N}$ for almost every $(t,x)$ in $Q$), such that the following assumptions holds:
\end{flushleft}
\begin{equation}\label{Eq 3.2}
a(t,x,\zeta)\cdot\zeta \geq \alpha|\zeta|^{p},\quad p>1, 
\end{equation}
\begin{equation}\label{Eq 3.3}
|a(t,x,\zeta)|\leq \beta[L(x,t)+|\zeta|^{p-1}], 
\end{equation}
\begin{equation}\label{Eq 3.4}
[a(t,x,\zeta)-a(t,x,\eta)]\cdot(\zeta-\eta)>0,
\end{equation}
for almost every $(t,x)$ in $Q$, for every $\zeta,\eta$ in $\mathbb{R}^{N}$, with $\zeta\neq\eta$, where $\alpha$ and $\beta$ are two positive constants, and $L$ is a nonnegative function in $L^{p'}(Q)$.\\
In all the following, we assume that $b:\mathbb{R}\rightarrow\mathbb{R}$ is a strictly increasing $C^{1}-$function which satisfies
\begin{equation}\label{Eq 3.5}
0<b_{0}\leq b'(s)\leq b_{1}\quad\forall s\in\mathbb{R}\text{ and }b(0)=0,
\end{equation}
\begin{equation}\label{Eq 3.6}
u_{0}\text{ is a measurable function in }\Omega\text{ such that }b(u_{0})\in L^{1}(\Omega),
\end{equation}
and that $\mu$ is a diffuse measure, i.e.,
\begin{equation}\label{Eq 3.7}
\mu\in\mathcal{M}_{0}(Q).
\end{equation}
Let us give the notion of renormalized solution for parabolic problem $(\ref{Eq 3.1})$ using a different formulation, we recall that the following definition is the natural extension of the one given in \cite{BPR} for diffuse measures.
\begin{definition}\label{Def 3.1}
Let $\mu\in\mathcal{M}_{0}(Q)$. A measurable function $u$ defined on $Q$ is a renormalized solution of problem $(\ref{Eq 3.1})$ if $T_{k}(b(u))\in L^{p}(0,T;W^{1,p}_{0}(\Omega))$ for every $k>0$, and if there exists a sequence $(\lambda_{k})$ in $\mathcal{M}(Q)$ such that 
\begin{equation}\label{Eq 3.8}
\underset{k\rightarrow\infty}{\text{lim}}\Vert \lambda_{k}\Vert_{\mathcal{M}(Q)}=0,
\end{equation}
and 
\begin{equation}\label{Eq 3.9}
\begin{aligned}
-\int_{Q}T_{k}(b(u))&\varphi_{t} dx dt+\int_{Q}a(t,x,\nabla u)\cdot\nabla\varphi dx dt=\\
&\int_{Q}\varphi d\mu +\int_{Q}\varphi d\lambda_{k}+\int_{\Omega}T_{k}(b(u_{0}))\varphi(0,x)dx
\end{aligned}
\end{equation}
for every $k>0$ and $\varphi\in C^{\infty}_{c}([0,T]\times\Omega)$.
\end{definition} 

\begin{remark}Note that
\begin{itemize}   
\item[(i)] Equation $(\ref{Eq 3.9})$ implies that $(T_{k}(b(u)))_{t}-\text{div}(a(t,x,\nabla u))$ is a bounded measure, and since $T_{k}(b(u))\in L^{p}(0,T;W^{1,p}_{0}(\Omega))$ and $\mu_{0}\in\mathcal{M}_{0}(Q)$ this means that 
\begin{equation}\label{Eq 3.10}
(T_{k}(b(u)))_{t}-\text{div}(a(t,x,\frac{1}{b'(u)}\nabla T_{k}(b(u))))=\mu+\lambda^{k}\text{   in   }\mathcal{M}(Q).
\end{equation}
\item[(ii)] Thanks to a result of \cite{PPP2}, the renormalized solution of problem $(\ref{Eq 3.1})$ turns out to coincide with the renormalized solution of the same problem in the sense of \cite{BPR} (see Proof of the Theorem \ref{Thm 4.2} bellow).
\item[(iii)] For every $\varphi\in W^{1,\infty}(Q)$ such that $\varphi=0$ on $(\lbrace T\rbrace\times\Omega)\cup((0,T)\times\partial\Omega)$, we can use $\varphi$ as test function in $(\ref{Eq 3.9})$ or in the approximate problem. 
\item[(iv)] A remark on the assumption $(\ref{Eq 3.5})$ is also necessary. As one could check later, due essentially to the presence of the term $g$ (dependent on $t$) in the formulation of the renormalized solution (i.e, the term with $\mu$) in Definition \ref{Def 3.1} , we are forced to assume $b'(s)\geq b_{0}>0$. We conjecture that this assumption is only technical to prove the equivalence and could be removed in order to deal with more general elliptic-parabolic problems (see \cite{AHL}, \cite{AW} and \cite{CW}). 
\end{itemize}
\end{remark}
\section{The formulation does not depend on the decomposition of $\mu$}\label{Sect. 4}
As we said before, for every measure $\mu\in\mathcal{M}_{0}(Q)$, there exist a decomposition $(f,g,\chi)$ not uniquely determined such that $f\in L^{1}(Q)$, $g\in L^{p}(0,T;V)$ and $\chi\in L^{p'}(0,T;W^{-1,p'}(\Omega))$ with 
\[ \mu=f+g+\chi\text{   in   }\mathcal{D}'(Q).\]
It is not known whether if every measure which can be decomposed in this form is diffuse. However, in \cite{PPP2} we have the following result.
\begin{lemma}
Assume that $\mu\in \mathcal{M}(Q)$ satisfies $(\ref{Eq 2.3})$, where $f\in L^{1}(Q)$, $g\in L^{p}(0,T;V)$ and $\chi\in L^{p'}(0,T;W^{-1,p'}(\Omega))$. If $g\in L^{\infty}(Q)$, then $\mu$ is diffuse.
\end{lemma}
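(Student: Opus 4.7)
The plan is to verify that $\mu(E)=0$ for every Borel set $E\subset Q$ with $\text{cap}_p(E)=0$. By inner regularity of bounded Radon measures on $Q$, together with the fact that Borel subsets of cap-zero sets remain cap-zero and the Hahn decomposition, it suffices to treat a compact set $K\subset Q$ with $\text{cap}_p(K)=0$ and show $\mu(K)=0$.

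The heart of the proof is to construct a sequence $\phi_n\in C^\infty_c(Q)$ with three joint properties: $0\leq\phi_n\leq 1$, $\phi_n(x)\to\chi_K(x)$ for \emph{every} $x\in Q$, and $\|\phi_n\|_W\to 0$. Starting from the definition of parabolic $p$-capacity, I would select $\tilde{\phi}_n\in W$ (truncated so $0\leq\tilde\phi_n\leq 1$) satisfying $\tilde\phi_n\geq\chi_{V_n}$ a.e.\ on open neighborhoods $V_n\downarrow K$ with $\|\tilde{\phi}_n\|_W\leq\text{cap}_p(V_n)+1/n\to 0$. Mollifying at a sufficiently small scale $\varepsilon_n$ and localizing by a smooth cutoff concentrated in a thin neighborhood of $K$ gives $\phi_n\in C^\infty_c(Q)$ equal to $1$ on a neighborhood of $K$ (so $\phi_n\to 1$ on $K$) and supported in a set shrinking to $K$ (so $\phi_n\to 0$ off $K$), while the truncation/mollification bounds preserve $\|\phi_n\|_W\to 0$.

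With such $\phi_n$ in hand, the distributional identity $\mu=f+g_t+\chi$ in $\mathcal{D}'(Q)$, together with integration by parts in $t$ (valid because $\phi_n$ is smooth and compactly supported in $Q$), yields
\[
\int_Q\phi_n\,d\mu \;=\; \int_Q f\phi_n\,dx\,dt \;+\; \int_0^T\langle\chi,\phi_n\rangle\,dt \;-\; \int_Q g\,\partial_t\phi_n\,dx\,dt.
\]
The left-hand side converges to $\mu(K)$ by dominated convergence for the bounded measure $\mu$, using $|\phi_n|\leq 1$ and pointwise-everywhere convergence. On the right-hand side, $\int f\phi_n\to\int_K f=0$ because the cap-zero set $K$ has zero Lebesgue measure; next, $|\langle\chi,\phi_n\rangle|\leq\|\chi\|_{L^{p'}(W^{-1,p'})}\|\phi_n\|_W\to 0$ by the $L^p(W^{1,p}_0)$–$L^{p'}(W^{-1,p'})$ duality; and finally, by the slicewise $V$–$V'$ pairing,
\[
\Bigl|\int_Q g\,\partial_t\phi_n\,dx\,dt\Bigr|\;\leq\;\|g\|_{L^p(0,T;V)}\,\|\partial_t\phi_n\|_{L^{p'}(0,T;V')}\;\leq\;\|g\|_{L^p(0,T;V)}\,\|\phi_n\|_W\to 0.
\]
Thus $\mu(K)=0$, and $\mu$ is diffuse.

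The main obstacle is the construction in the second paragraph: obtaining test functions $\phi_n$ that approximate $\chi_K$ \emph{pointwise everywhere} rather than merely $dx\,dt$-a.e. This is essential because $\mu$ could a priori charge Lebesgue-null sets (which is exactly what we are trying to rule out at the capacity level), so $dx\,dt$-a.e.\ convergence would be insufficient to pass to the limit inside $\int d\mu$. The hypothesis $g\in L^\infty(Q)$ will be invoked when one needs to extend the key identity beyond $C^\infty_c(Q)$—for instance, to test functions in $W\cap L^\infty$ that do not vanish at $t=0$ or $t=T$—where the boundary terms $[g\phi]_0^T$ arising from integration by parts are controlled via the $L^\infty$-bound on $g$ combined with the trace embedding $W\hookrightarrow C([0,T];L^2(\Omega))$.
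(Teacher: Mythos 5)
There is a genuine gap, and it sits exactly where the hypothesis $g\in L^{\infty}(Q)$ ought to be doing its work. As written, your argument never uses that hypothesis: the three right-hand terms are estimated using only $f\in L^{1}(Q)$, $\chi\in L^{p'}(0,T;W^{-1,p'}(\Omega))$ and $g\in L^{p}(0,T;V)$. Since the paper states immediately before this lemma that it is \emph{not known} whether every measure admitting a decomposition of the form $(\ref{Eq 2.3})$ is diffuse, a proof that never invokes $g\in L^{\infty}(Q)$ cannot be complete. The flaw is in the construction of $\phi_{n}$: starting from a near-extremal $u\in W$ for $\mathrm{cap}_{p}(V_{n})$, you truncate to $[0,1]$, mollify, and multiply by a cutoff shrinking to $K$, asserting that these operations ``preserve $\Vert\phi_{n}\Vert_{W}\to 0$''. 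They do not. The space $W$ is not stable under truncation: for $u\in W$ the time derivative of the truncation is a priori only a distribution, and even when it lies in $L^{p'}(0,T;V')$ its norm there is not controlled by $\Vert u_{t}\Vert_{L^{p'}(0,T;V')}$, because a negative-order norm is not monotone under pointwise domination ($|a|\le|b|$ a.e.\ does not imply $\Vert a\Vert_{V'}\le\Vert b\Vert_{V'}$). Likewise, a cutoff $\theta_{n}$ concentrated on a thin neighbourhood of $K$ contributes the term $\psi_{n}\,\partial_{t}\theta_{n}$, whose $L^{p'}(0,T;V')$ norm is not small as the neighbourhood shrinks. Hence your final estimate $\bigl|\int_{Q}g\,\partial_{t}\phi_{n}\bigr|\le\Vert g\Vert_{L^{p}(0,T;V)}\Vert\phi_{n}\Vert_{W}\to 0$ is unsupported. (The parts needing only $0\le\phi_{n}\le 1$, $\phi_{n}\to\chi_{K}$ pointwise and $\Vert\phi_{n}\Vert_{L^{p}(0,T;W^{1,p}_{0}(\Omega))}\to 0$ --- the left-hand side and the $f$ and $\chi$ terms --- are fine, since those properties do survive truncation, mollification and localization; your ``main obstacle'' paragraph therefore points at the wrong difficulty.)

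The hypothesis $g\in L^{\infty}(Q)$ rescues the $g$-term by a different route: writing $\phi_{n}=S(v_{n})$ with $S$, $S'$, $S''$ bounded, one does not try to bound $\Vert\partial_{t}S(v_{n})\Vert_{L^{p'}(0,T;V')}$, but instead moves $S'$ onto $g$, i.e.\ $\int_{0}^{T}\langle\partial_{t}S(v_{n}),g\rangle=\int_{0}^{T}\langle\partial_{t}v_{n},S'(v_{n})g\rangle$, and checks that $S'(v_{n})g$ lies in $L^{p}(0,T;V)$ with norm bounded uniformly in $n$; it is precisely the bound $\Vert g\Vert_{L^{\infty}(Q)}$ that controls the product $g\,S''(v_{n})\nabla v_{n}$ in $L^{p}(Q)$. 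The pairing then vanishes because $\Vert\partial_{t}v_{n}\Vert_{L^{p'}(0,T;V')}\to 0$ while the other factor stays bounded. This is essentially the mechanism of \cite{PPP2}, Proposition 3.1, which is all the paper itself offers as a proof of this lemma; to repair your argument you would need to replace the truncate-and-hope step by this duality trick (or an equivalent use of the $L^{\infty}$ bound).
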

\begin{proof}
See \cite{PPP2}, Proposition 3.1.
\end{proof}
\begin{flushleft}
Recall the notion of renormalized solution in the sense of \cite{BPR}.
\end{flushleft}
\begin{definition}\label{Def 4.1'}
Let $\mu\in\mathcal{M}_{0}(Q)$. A measurable function defined on $Q$ is a renormalized solution of problem $(\ref{Eq 3.1})$ if 
\begin{equation}\label{Eq 4.1}
 b(u)-g\in L^{\infty}(0,T;L^{1}(\Omega)),\quad T_{k}(b(u)-g)\in L^{p}(0,T;W^{1,p}_{0}(\Omega)),\quad\forall k>0,
\end{equation}
\begin{equation}\label{Eq 4.2}
\underset{h\rightarrow\infty}{\text{lim}}\int_{\lbrace h\leq |b(u)-g|\leq h+1\rbrace}|\nabla u|^{p}dx dt=0,
\end{equation}
and for every $S\in W^{2,\infty}(\mathbb{R})$ such that $S'$ has compact support, 
\begin{equation}\label{Eq 4.3}
\begin{aligned}
&-\int_{Q}S(b(u)-g)\varphi_{t}dx dt+\int_{Q}a(t,x,\nabla u)\nabla (S'(b(u)-g)\varphi)dx dt\\
&=\int_{Q}fS'(b(u)-g)\varphi dx dt+\int_{Q}G\cdot\nabla (S'(b(u)-g)\varphi)dx dt+\int_{\Omega}S(b(u_{0}))\varphi(0,x)dx,
\end{aligned}
\end{equation}
for every $\varphi\in C^{\infty}_{c}([0,T]\times\Omega)$.
\end{definition}

Finally, we conclude by proving that Definition \ref{Def 3.1} imply that $u$ is a renormlized solution in the sense of Definition \ref{Def 4.1'}, this proves that the formulations are actually equivalent.
\begin{theorem}\label{Thm 4.2}
Let $\mu$ be splitted as in $(\ref{Eq 2.3})$, namely
\[ \mu=f-\text{div }(G)+g,\quad f\in L^{1}(Q), \; G\in L^{p'}(Q)\text{ and }g\in L^{p}(0,T;V).\]
If $u$ satisfies Definition \ref{Def 3.1}, then $u$ satisfies Definition \ref{Def 4.1'}.
\end{theorem}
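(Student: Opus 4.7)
The plan is to derive each of the three ingredients of Definition \ref{Def 4.1'} --- the regularity $(\ref{Eq 4.1})$, the energy decay $(\ref{Eq 4.2})$, and the renormalized identity $(\ref{Eq 4.3})$ --- from the data supplied by Definition \ref{Def 3.1}, systematically exploiting the lower bound $b' \geq b_{0} > 0$ from $(\ref{Eq 3.5})$ together with the vanishing property $\Vert \lambda_{k} \Vert_{\mathcal{M}(Q)} \to 0$. Setting $v := b(u) - g$, the goal is to show that $v$ plays, in the sense of \cite{BPR}, the role of the ``renormalized variable''.

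First, to prove $(\ref{Eq 4.1})$, I would rewrite $(\ref{Eq 3.10})$ as an equation for $v$ by subtracting $g_{t}$ from both sides. Testing with a Lipschitz approximation of $\mathrm{sign}(v)$ and using the coercivity $(\ref{Eq 3.2})$ on $\lbrace |v| \leq 1\rbrace$, together with $b(u_{0}) \in L^{1}(\Omega)$, yields $v \in L^{\infty}(0,T;L^{1}(\Omega))$. For the property $T_{k}(v) \in L^{p}(0,T;W^{1,p}_{0}(\Omega))$, I would test with $T_{k}(v)$ itself, suitably time-regularized (Landes-type smoothing) to make it admissible; since $\nabla T_{k}(v) = \chi_{\lbrace |v| \leq k\rbrace}(b'(u)\nabla u - \nabla g)$ and $b'(u) \geq b_{0}$, Young's inequality absorbs the $\nabla g$ term into the coercivity bound, with $g \in L^{p}(0,T;V)$ being used in an essential way.

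Second, for the energy decay $(\ref{Eq 4.2})$, I would test the equation for $v$ with $T_{1}(v - T_{h}(v))$, whose gradient is supported in $\lbrace h \leq |v| \leq h+1\rbrace$. On that set, coercivity produces a lower bound of the form
\begin{equation*}
\alpha \int_{\lbrace h \leq |v| \leq h+1\rbrace} |\nabla u|^{p}\, dx\, dt \leq |\mu|(\lbrace |v| \geq h\rbrace) + \Vert \lambda_{k(h)}\Vert_{\mathcal{M}(Q)} + o(1),
\end{equation*}
for an appropriate choice $k(h) \to \infty$. The first term on the right vanishes as $h \to \infty$ because $v \in L^{\infty}(0,T;L^{1}(\Omega))$ combined with Proposition \ref{Prop 2.2} forces $\mathrm{cap}_{p}(\lbrace |v|\geq h\rbrace) \to 0$ and $\mu$ is diffuse; the second vanishes by $(\ref{Eq 3.8})$.

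Third, to recover the renormalized identity $(\ref{Eq 4.3})$, I would insert $\varphi\, S'(v)$ (with $S'$ compactly supported in $[-M, M]$) into $(\ref{Eq 3.9})$ and identify, via the chain rule and a Landes-type time regularization, the time-derivative term as $-\int_{Q} S(v)\varphi_{t}\, dx\, dt + \int_{\Omega} S(b(u_{0}))\varphi(0,x)\, dx$. The main obstacle, and the step demanding the greatest care, is the legitimacy of inserting $\varphi\, S'(v)$ as a test function: $v$ is not a priori smooth in time and $g$ is not bounded, so the classical test-function choices from \cite{BPR} are not directly available. The strategy is to approximate $g$ by its truncations while simultaneously regularizing $v$ in time, invoking the equidiffuseness of Lemma \ref{Lem 2.4} to guarantee that the measure-valued error terms vanish in the limit. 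Once this identification is rigorous, the integrals involving $f$, $G$, and $g_{t}$ recombine via the decomposition $\mu = f - \mathrm{div}(G) + g_{t}$ into precisely the right-hand side of $(\ref{Eq 4.3})$.
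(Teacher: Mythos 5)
Your overall architecture (passing to the variable $b(u)-g$, Steklov/Landes time-regularization, test functions built from $\Theta_h=T_1(s-T_h(s))$, and the vanishing of $\Vert\lambda_k\Vert_{\mathcal{M}(Q)}$) matches the paper's, but two of your steps would not go through as described.

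First, your justification of $(\ref{Eq 4.2})$ uses the wrong tool. Proposition \ref{Prop 2.2} is a capacitary estimate for the level sets of the \emph{weak} solution $u\in W$ of $(\ref{Eq 2.5})$ with datum in $\mathcal{M}(Q)\cap L^{p'}(0,T;W^{-1,p'}(\Omega))$; a renormalized solution (or $v=b(u)-g$, or even $T_k(b(u))$, which solves $(\ref{Eq 3.10})$ with a genuine measure on the right-hand side) does not belong to $W$, so the proposition does not apply to $\lbrace |v|\geq h\rbrace$. Moreover, even granting $\mathrm{cap}_p(\lbrace |v|\geq h\rbrace)\to 0$, concluding $|\mu|(\lbrace |v|\geq h\rbrace)\to 0$ requires the $\epsilon$-$\delta$ absolute continuity of $\mu$ with respect to $\mathrm{cap}_p$, which is strictly stronger than the definition of diffuseness and would itself need proof or citation. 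The paper avoids all of this: it keeps the decomposition $\mu=f-\mathrm{div}(G)+g_t$ explicit on the right-hand side, so that after testing with $\Theta_h$ and applying Young's inequality one only has to send to zero integrals of the \emph{fixed} functions $|f|,|G|^{p'},|g|^{p},|L|^{p'}$ over $\lbrace h<|b(u)-g|<h+1\rbrace$; these vanish by ordinary Lebesgue absolute continuity once the $L^\infty(0,T;L^1(\Omega))$ bound gives $\mathrm{meas}(\lbrace |b(u)-g|>h\rbrace)\to 0$. You should replace your capacity argument by this.

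Second, the step you yourself flag as delicate --- inserting $\varphi S'(v)$ --- is where the paper's actual work lies, and your sketch omits its key device. The Steklov averages $\frac1h\int_t^{t+h}\psi(v)\,ds$ and $\frac1h\int_{t-h}^{t}\psi(\tilde v)\,ds$ yield only \emph{one-sided} inequalities for the time term (Lemmas 2.1 and 2.3 of \cite{BP1}), and only for \emph{nondecreasing} Lipschitz $\psi$. The paper therefore first proves the inequality $(\ref{Eq 4.4})$ with the forward average, then the reverse inequality $(\ref{Eq 4.5})$ with the backward average (extending $v$ for $t<0$ by smooth approximations $U_j$ of $T_k(b(u_0))$ to recover the initial datum), and finally applies these with $\psi=\int_0^s(S'')^{+}$ and $\psi=\int_0^s(S'')^{-}$ respectively, subtracting to reconstruct an arbitrary $S\in W^{2,\infty}(\mathbb{R})$; this is what produces $(\ref{Eq 4.6})$ with the error $2\Vert\zeta\Vert_\infty\Vert S'\Vert_\infty\Vert\lambda_k\Vert_{\mathcal{M}(Q)}$. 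Without this monotone splitting of $S''$ the chain-rule identification of the time term is not justified. Note also that Lemma \ref{Lem 2.4} (equidiffuseness of mollified measures) plays no role in this equivalence proof --- it belongs to the existence argument of Section \ref{Sect. 5} --- and truncating $g$ changes the measure $\mu$, so neither of the ingredients you invoke can close this step.
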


\begin{proof}
We split the proof in two steps.\\

\textit{Step.1} Let $v=T_{k}(b(u)-g)$, we have $v\in L^{p}(0,T;V)$. Moreover, using the decomposition of $\mu$ in $(\ref{Eq 2.3})$, and integrating by parts the term with $g$, we have
\[
\begin{aligned}
-\int_{Q}v\varphi_{t}dx dt&+\int_{Q}\frac{1}{b'(u)}a(t,x,\nabla T_{k}(b(u)))dxdt\\
&=\int_{Q}f\varphi dx dt+\int_{Q}G\cdot\nabla \varphi dx dt+\int_{Q}\varphi d\lambda_{k}+\int_{\Omega}T_{k}(b(u_{0}))\varphi(0,x)dx
\end{aligned}
\]
for every $\varphi\in C^{\infty}_{c}([0,T]\times\Omega)$. Observe that for every $\varphi\in W^{1,\infty}(Q)$ the above equality remains true. We can choose $\varphi(x,t)$ such that
\[ \varphi(x,t)=\lbrace (x,t)\frac{1}{h}\int_{t}^{t+h}\varphi(v(s,x))ds,\rbrace\]
where $\zeta\in C^{\infty}_{c}([0,T]\times\overline{\Omega})$, $\zeta\geq 0$, $\zeta\psi(0)=0$ on $(0,T)\times\partial\Omega$, and $\psi$ is lipschitz nondecreasing function. This clearly implies from (\cite{BP1}, Lemma 2.1)
\[
\begin{aligned}
&\underset{h\rightarrow 0}{\text{liminf}}\lbrace -\int_{Q}(v-T_{k}(b(u_{0})))(\zeta\frac{1}{h}\int_{t}^{t+h}\psi(v)ds)_{t}dx dt\rbrace\\
&\geq -\int_{Q}(\int_{0}^{t}\psi(r)dr)\zeta_{t}dx dt-\int_{\Omega}(\int_{0}^{T_{k}(b(u_{0}))}\psi(r)dr)\zeta(0,x)dx.
\end{aligned}
\]
Indeed, since $\psi$ is bounded, we have
\[ |\int_{Q}\psi d\lambda_{k}|\leq \Vert\zeta\Vert_{\infty}\Vert \psi\Vert_{\infty}\Vert \lambda_{k}\Vert_{\mathcal{M}(Q)},\]
and since $\psi$ is Lispchitz, we have $\psi(v)\in L^{p}(0,T;W^{1,p}_{0}(\Omega))$. Notice that $(\psi(v))_{h}$ converges to $\psi(v)$ strongly in $L^{p}(0,T;W^{1,p}_{0}(\Omega))$ and weakly* in $L^{\infty}(Q)$. So that, as $h\rightarrow 0$,
\begin{equation}\label{Eq 4.4}
\begin{aligned}
&-\int_{Q}(\int_{0}^{r}\psi(r)dr)\zeta_{r}dx dt+\int_{Q}a(t,x,\nabla T_{k}(u))\nabla (\psi(r)\zeta)dx dt\\
&\leq \int_{Q}f \psi(v)\zeta dx dt+\int_{Q}G\cdot\nabla (\psi(v)\zeta)dx dt\\
&+\int_{\Omega}(\int_{0}^{T_{k}(b(u_{0}))}\psi(r)dr)\zeta(0,x)dx +\Vert \zeta\Vert\Vert\psi\Vert_{\infty}\Vert \lambda_{k}\Vert_{\mathcal{M}(Q)},
\end{aligned}
\end{equation}
for every $\psi$ lipschitz and nondecreasing. In order to obtain the reverse inequality, we only need to take
\[ \varphi(x,t)=\lbrace (x,t)\frac{1}{h}\int_{t-h}^{t}\psi(\tilde{v}(s,x))ds\rbrace\]
where $\tilde{v}(x,t)=v(x,t)$ when $t\geq 0$ and $\tilde{v}=U_{j}$ when $t<0$, being $U_{j}\in C^{\infty}_{c}(\Omega)$ such that $U_{j}\rightarrow T_{k}(b(u_{0}))$ strongly in $L^{1}(\Omega)$. Thus, using (\cite{BP1}, Lemma 2.3), we obtain
\[
\begin{aligned}
&\underset{h\rightarrow 0}{\text{liminf}}\lbrace -\int_{Q}(v-T_{k}(b(u_{0})))(\zeta\frac{1}{h}\int_{t-h}^{t}\psi(v)ds)_{t}dx dt\rbrace\\
&\leq -\int_{Q}(\int_{0}^{r}\psi(r)dr)\zeta_{t}dx dt-\int_{\Omega}(\int_{0}^{U_{j}}\psi(r)dr)\zeta(0,x)dx\\
&-\int_{\Omega}(T_{k}(b(u_{0}))-U_{j})\zeta(0,x)dx
\end{aligned}
\]
Recalling that $\tilde{v}\in L^{p}(0,T;W^{1,p}_{0}(\Omega))\cap L^{\infty}(Q)$, when $h\rightarrow 0$, we can pass to the limit in the other terms as before, and we observe that
\[
\begin{aligned}
&-\int_{Q}(\int_{0}^{v}\psi(r)dr)\zeta_{t} dx dt+\int_{Q}a(t,x,\nabla u)\cdot\nabla (\psi(v)\zeta)dx dt\\
&\geq\int_{Q}f\psi(v)\zeta dx dt+\int_{Q}G\cdot\nabla (\psi(v)\zeta)dx dt+\int_{\Omega}(\int_{0}^{U_{0}}\psi(r)dr)\zeta(0,x)dx\\
&+\int_{\Omega}(T_{k}(b(u_{0})-U_{j})\psi(U_{j})\zeta(0,x)dx -\Vert \zeta\Vert_{\infty}\Vert \psi\Vert_{\infty}\Vert \lambda_{k}\Vert_{\mathcal{M}(Q)}
\end{aligned}
\]
Hence, from $U_{j}\rightarrow T_{k}(b(u_{0}))$, we have
\begin{equation}\label{Eq 4.5}
\begin{aligned}
&-\int_{Q}(\int_{0}^{v}\psi(r)dr)\zeta_{t}dx dt+\frac{1}{b'(u)}\int_{Q}a(t,x,\nabla T_{k}(b(u)))\cdot\nabla (\psi(r)\zeta)dx dt\\
&\geq\int_{Q}f\psi(v)\zeta dx dt+\int_{Q}G\cdot\nabla (\psi(v))dx dt+\int_{\Omega}(\int_{0}^{T_{k}(b(u_{0}))}\psi(r)dr)\zeta(0,x)dx\\
&-\Vert \zeta\Vert_{\infty}\Vert\psi\Vert_{\infty}\Vert\lambda_{k}\Vert_{\mathcal{M}(Q)}.
\end{aligned}
\end{equation}
Using equality $(\ref{Eq 4.4})$ with ($S\in W^{2,\infty}(\mathbb{R})$ and $\psi=\int_{0}^{s}(S''(t))^{+}dt$) and equality $(\ref{Eq 4.5})$ with ($\psi=\int_{0}^{s}(S''(t))^{-}dt$), we easily deduce by substracting the two inequalities (observe that $S'(s)=\int_{0}^{s}(S''(t)^{+}-S''(t)^{-})dt$) that
\begin{equation}\label{Eq 4.6}
\begin{aligned}
&-\int_{Q}S(v)\zeta_{t}dx dt +\int_{Q}a (t,x,\nabla u)\cdot\nabla (S'(v)\zeta)dx dt\\
&\leq \int_{Q}fS'(v)\zeta dx dt+\int_{Q}G\nabla (S'(v)\zeta)dx dt\\
&+\int_{\Omega}S(T_{k}(b(u_{0})))\zeta(0,x)dx +2\Vert \zeta\Vert_{\infty}\Vert S'\Vert_{\infty}\Vert\lambda_{k}\Vert_{\mathcal{M}(Q)},
\end{aligned}
\end{equation}
for every $S\in W^{2,\infty}(\mathbb{R})$ and for every nonnegative $\zeta$.\\

\textit{Step.2} Let us use $S'(\Theta_{h}(s))$ in $(\ref{Eq 4.6})$ such that $\Theta_{h}=T_{1}(s-T_{h}(s))$ and $\zeta=\zeta(t)$. Then we easily obtain by setting $R_{h}(s)=\int_{0}^{s}\Theta_{h}(\zeta)d\zeta$,
\[
\begin{aligned}
&-\int_{Q}R_{h} (T_{k}(b(u))-g)\zeta_{t}dx dt+\int_{\lbrace h<|b(u)-g|<h+k\rbrace}a(t,x,\nabla u)\cdot\nabla (T_{k}(b(u))-g)\zeta dx dt\\
&\leq \int_{Q}f \Theta_{h}(T_{k}(b(u))-g)\zeta dx dt+\int_{\lbrace h<|b(u)-g|< h+k\rbrace}G\cdot\nabla (T_{k}(b(u)-g))dx dt\\
&+\int_{\Omega}R_{h}(T_{k}(b(u_{0})))\zeta(0,x)dx +2\Vert \zeta\Vert_{\infty}\Vert\lambda_{k}\Vert_{\mathcal{M}(Q)}.
\end{aligned}
\]
Moreover, we can use young's inequality, assumption $(\ref{Eq 3.2})$ and $(\ref{Eq 3.3})$ to get
\[
\begin{aligned}
&-\int_{Q}R_{h}(T_{k}(b(u)-g))\zeta_{t}dx dt+\int_{\lbrace h<|b(u)-g|< h+1\rbrace}b'(u)|\nabla T_{k}(b(u))|^{p}\zeta dx dt\\
&\leq\int_{Q}f\Theta_{h}(T_{k}(b(u))-g)\zeta dx dt +C\int_{\lbrace h<|b(u)-g|<h+1\rbrace}(|G|^{p'}+|g|^{p}+|L|^{p'})\zeta dx dt\\
&+\int_{\Omega}R_{h}(T_{k}(b(u_{0})))\zeta(0,x)dx +2\Vert \zeta\Vert_{\infty}\Vert\lambda_{k}\Vert_{\mathcal{M}(Q)},
\end{aligned}
\]
Now, letting $k\rightarrow\infty$, thanks to $(\ref{Eq 3.8})$  and Fatou's Lemma, we deduce 
\[
\begin{aligned}
&-\int_{Q}R(b(u)-g)\zeta_{t}dx dt+\alpha\int_{\lbrace h<|b(u)-g|<h+1\rbrace}b'(u)|\nabla u|^{p}dx dt \\
&\leq \int_{Q}f \Theta_{h}(u-g)\zeta dx dt +C\int_{\lbrace h<|b(u)-g|\leq h+1\rbrace}(|G|^{p'}+|g|^{p}+|L|^{p'})\zeta dx dt\\
&+\int_{\Omega}R_{h}(b(u_{0}))\zeta(0,x)dx
\end{aligned}
\]
Consider $\zeta=1-\frac{1}{\epsilon}T_{\epsilon}(t-\tau)^{+}$, for $\tau\in (0,T)$, and letting $\epsilon\rightarrow 0$, we claim that the estimate of $b(u)-g$ in $L^{\infty}(0,T;L^{1}(\Omega))$ is valid. By repeating the argument for the nonincreasing $\zeta_{\epsilon}\in C^{\infty}_{c}([0,T])$, we are allowed to pass to the limit  $\zeta_{\epsilon}\rightarrow 1$ to prove that
\[
\begin{aligned}
&\alpha b_{0}\int_{\lbrace h<|b(u)-g|<h+1\rbrace}|\nabla u|^{p}dx dt\\
&\leq\int_{\lbrace |b(u)-g|>h\rbrace}|f|dx dt+C\int_{\lbrace h<|b(u)-g|<h+1\rbrace}(|G|^{p'}+|g|^{p}+|L|^{p'})\zeta dx dt +\int_{\lbrace|b(u_{0})|>h\rbrace}b(u_{0})dx,
\end{aligned}
\]
which implies $(\ref{Eq 4.2})$. Finally, by using $S\in W^{2,\infty}(\mathbb{R})$ such that $S'$ has compact support, $\zeta\in C^{\infty}_{c}([0,T]\times\Omega)$ and the regularity  $(\ref{Eq 4.1})$, we can easily deduce $(\ref{Eq 4.3})$ by passing to the limit in $(\ref{Eq 4.6})$  and using $(\ref{Eq 3.8})$.
\end{proof}
\section{Existence of Solutions}\label{Sect. 5}
Now we are ready to prove the main results. Some of the reasoning is based on the ideas developed in \cite{BPR} (see also \cite{DPP}, \cite{PPP2} and \cite{Po1}). First we have to prove the existence of renormalized solution for problem $(\ref{Eq 3.1})$.\\
\begin{theorem}\label{Thm 5.1}
Under assumptions $(\ref{Eq 3.1})-(\ref{Eq 3.7})$, there exists at least a renormalized solution $u$ of problem $(\ref{Eq 3.1})$.
\end{theorem}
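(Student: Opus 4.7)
The plan is to approximate by convolution and pass to the limit. Set $\mu_{n}=\rho_{n}*\mu$ using the mollifiers (\ref{Eq 2.7}), and choose smooth initial data $u_{0,n}$ with $b(u_{0,n})\to b(u_{0})$ in $L^{1}(\Omega)$. By the classical Leray--Lions theory, the approximating problem
\[
b(u_{n})_{t}-\text{div}(a(t,x,\nabla u_{n}))=\mu_{n},\quad b(u_{n})(0,\cdot)=b(u_{0,n}),\quad u_{n}|_{(0,T)\times\partial\Omega}=0,
\]
admits a unique weak solution $u_{n}\in L^{p}(0,T;W^{1,p}_{0}(\Omega))$ with $b(u_{n})_{t}\in L^{p'}(0,T;W^{-1,p'}(\Omega))$. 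The strategy is then to obtain uniform estimates, extract convergent subsequences, establish strong compactness of truncations, and verify Definition \ref{Def 3.1} in the limit; the equidiffuse property of $\{\mu_{n}\}$ granted by Lemma \ref{Lem 2.4} is the key ingredient for the last two steps.

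\textbf{A priori estimates.} Testing the regularized equation with a Landes-type time regularization of $T_{k}(b(u_{n}))$ and using the coercivity (\ref{Eq 3.2}) together with $b'\geq b_{0}>0$ from (\ref{Eq 3.5}), I would derive $\|\nabla T_{k}(b(u_{n}))\|_{L^{p}(Q)}^{p}\leq C k$ uniformly in $n$, along with the capacity estimate
\[
\text{cap}_{p}(\{|b(u_{n})|>k\})\leq C\max\{k^{-1/p},k^{-1/p'}\},\qquad k\geq 1,
\]
which is the analog of Proposition \ref{Prop 2.2} postponed to the appendix. Standard Marcinkiewicz/Stampacchia arguments then give boundedness of $\nabla u_{n}$ in $L^{q}(Q)$ for every $q<p-N/(N+1)$, so that, up to a subsequence, $u_{n}\to u$ and $b(u_{n})\to b(u)$ a.e.\ in $Q$.

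\textbf{Strong compactness of truncations (main obstacle).} The crux is to upgrade the weak convergence $\nabla T_{k}(b(u_{n}))\rightharpoonup \nabla T_{k}(b(u))$ in $L^{p}(Q)^{N}$ to strong convergence, as this is indispensable for identifying the limit of $a(t,x,\nabla u_{n})$. Following the strategy of \cite{PPP2}, I would avoid using any particular decomposition (\ref{Eq 2.3}) of $\mu$ and instead exploit the equidiffuseness: combined with the uniform capacity bound above, Lemma \ref{Lem 2.4} forces $\sup_{n}|\mu_{n}|(\{|b(u_{n})|\geq h\})\to 0$ as $h\to\infty$. Using a Landes-regularized version of $T_{k}(b(u_{n}))-T_{k}(b(u))$ as test function, multiplied by a cut-off of $b(u_{n})$ at level $h$ and an appropriate $C^{\infty}_{c}$ time weight, and absorbing the measure contribution via the above smallness, I would reach
\[
\int_{Q}\bigl[a(t,x,\nabla u_{n})-a(t,x,\nabla u)\bigr]\cdot\nabla\bigl(T_{k}(b(u_{n}))-T_{k}(b(u))\bigr)\,dx\,dt\longrightarrow 0,
\]
whence strong convergence follows by a Minty-type argument invoking the strict monotonicity (\ref{Eq 3.4}).

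\textbf{Defect measures and conclusion.} Once the strong compactness is available, I define
\[
\lambda_{k}^{n}:=\bigl(T_{k}(b(u_{n}))\bigr)_{t}-\text{div}\Bigl(a\bigl(t,x,\tfrac{1}{b'(u_{n})}\nabla T_{k}(b(u_{n}))\bigr)\Bigr)-\mu_{n}\chi_{\{|b(u_{n})|<k\}},
\]
which is a bounded measure supported on $\{|b(u_{n})|\geq k\}$ satisfying
\[
\|\lambda_{k}^{n}\|_{\mathcal{M}(Q)}\leq |\mu_{n}|(\{|b(u_{n})|\geq k\})+\bigl\|b(u_{0,n})\chi_{\{|b(u_{0,n})|\geq k\}}\bigr\|_{L^{1}(\Omega)}.
\]
Passing to the limit $n\to\infty$ with $k$ fixed produces a bounded measure $\lambda_{k}$ on $Q$ inheriting the same smallness, which, letting $k\to\infty$, yields (\ref{Eq 3.8}); passing to the limit in the weak formulation of the approximate problem tested against $\varphi\in C^{\infty}_{c}([0,T]\times\Omega)$ and using the strong convergence of $\nabla T_{k}(b(u_{n}))$ establishes (\ref{Eq 3.9}) and completes the proof.
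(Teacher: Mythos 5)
Your proposal follows essentially the same route as the paper: mollify $\mu$ so that Lemma \ref{Lem 2.4} gives equidiffuseness, derive the a priori and capacity estimates, construct the defect measures $\lambda_{k}^{n}$ from the energy concentrated on the level sets $\lbrace k<|b(u_{n})|<k+\eta\rbrace$, and combine equidiffuseness with the capacity decay of Proposition \ref{Prop 2.2} to obtain $(\ref{Eq 3.8})$ in the limit $k\rightarrow\infty$. The only notable difference is one of emphasis: you single out the strong compactness of $\nabla T_{k}(b(u_{n}))$ as the main obstacle and sketch a Minty-type argument for it, whereas the paper delegates exactly this point to the references \cite{BPR}, \cite{BM}, \cite{BMR} when stating the convergences $(\ref{Eq 5.13})$.
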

\begin{proof}
We first introduce the approximate problems. For $n\geq 1$ fixed, we define
\begin{equation}\label{Eq 5.1}
b_{n}(s)=b(T_{\frac{1}{n}}(s))+ns\text{   a.e. in   }\Omega,\; \forall s\in\mathbb{R},
\end{equation}
\begin{equation}\label{Eq 5.2}
u_{0}^{n}\in C^{\infty}_{0}(\Omega):\quad b_{n}(u_{0}^{n})\rightarrow b(u_{0})\text{ in }L^{1}(\Omega)\text{ as n tends to }+\infty. 
\end{equation}
We consider a sequence of mollifiers $(\rho_{n})$, and we define the convolution $\rho_{n}*\mu$ for every $(t,x)\in Q$ by 
\begin{equation}\label{Eq 5.3}
\mu^{n}(t,x)=\rho_{n}*\mu(t,x)=\int_{Q}\rho_{n}(t-s,x-y) d\mu(s,y).
\end{equation}
Then we consider the approximate problem of $(\ref{Eq 3.1})$
\begin{equation}\label{Eq 5.4}
\left\{  
\begin{aligned}   
&(b_{n}(u_{n}))_{t}-\text{div}(a(t,x,\nabla u_{n}))=\mu_{n} \quad\mbox{in }(0,T)\times\Omega\\   
&b_{n}(u_{n})=b_{n}(u_{0}^{n}) \quad\mbox{on }\lbrace 0\rbrace\times\Omega\\   
&u_{n}=0 \quad\mbox{on }(0,T)\times\partial\Omega,   
\end{aligned} 
\right.  
\end{equation}
By classical results (see \cite{L}), we can find a nonnegative weak solution $u_{n}\in L^{p}(0,T;W^{1,p}_{0}(\Omega))$ for problem $(\ref{Eq 5.4})$. Our aim is to prove that a subsequence of these approximate solutions $(u_{n})$ converges increasingly to a measurable function $u$, which is a renormalized solution of problem $(\ref{Eq 3.1})$. We will divide the proof into several steps. We present a self-contained proof for the sake of clarity and readability.\\

\textbf{Step.1} Basic estimates.\\
 Choosing $T_{k}(b_{n}(u_{n})-g_{n})$ as a test function in $(\ref{Eq 5.4})$, we have 
\begin{equation}\label{Eq 5.5}
\begin{aligned}
&\int_{\Omega}\overline{T}_{k}(b_{n}(u_{n})-g_{n})dx +\int_{0}^{t}\int_{\Omega}a(x,s,\nabla u_{n})\cdot\nabla T_{k}(b_{n}(u_{n})-g_{n})dx ds=\\
&\int_{0}^{t}\int_{\Omega}f_{n}T_{k}(b_{n}(u_{n})-g_{n})dx dt+\int_{0}^{t}\int_{\Omega}G_{n}\cdot\nabla T_{k}(b_{n}(u_{n})-g_{n})dx ds+\int_{\Omega}\overline{T}_{k}(b_{n}(u_{0}^{n}))dx,
\end{aligned}
\end{equation}
for almost every $t$ in $(0,T)$, and where $\overline{T}_{k}(r)=\int_{0}^{r}T_{k}(s)ds$. It follows from the definition of $\overline{T}_{k}$, assumptions $(\ref{Eq 3.2})-(\ref{Eq 3.3})$ and $(\ref{Eq 3.6})$ that 
\begin{equation}\label{Eq 5.6}
\begin{aligned}
&\int_{\Omega}\overline{T}_{k}(b_{n}(u_{n})-g_{n})dx +\alpha\int_{\lbrace|b_{n}(u_{n})-g_{n}|\leq k\rbrace}b'_{n}(u_{n})|\nabla u_{n}|^{p}dx ds\\
&\leq k\Vert \mu_{n}\Vert_{L^{1}(Q)}+\beta\int_{\lbrace|b_{n}(u_{n})-g_{n}|\leq k\rbrace}L(x,s)|\nabla g_{n}|dx ds\\
&+\beta\int_{\lbrace|b_{n}(u_{n})-g_{n}|\leq k\rbrace}|\nabla u_{n}|^{p-1}|\cdot\nabla g_{n}|dx ds+k\Vert b_{n}(u_{0}^{n})\Vert_{L^{1}(\Omega)}
\end{aligned}
\end{equation}
Then, from $(\ref{Eq 3.5})$ and young's inequality 
\begin{equation}\label{Eq 5.7}
\begin{aligned}
&\int_{\Omega}\overline{T}_{k}(b_{n}(u_{n})-g_{n})dx +\frac{\alpha}{2}\int_{\lbrace|b_{n}(u_{n})-g_{n}|\leq k\rbrace}b'_{n}(u_{n})|\nabla u_{n}|^{p}dx dt\\
&\leq k\Vert \mu_{n}\Vert_{L^{1}(Q)}+\beta\Vert L\Vert_{L^{p'}(Q)}\Vert\nabla g_{n}\Vert_{L^{p}(Q)}\\
&+C\Vert \nabla g_{n}\Vert^{p'}_{L^{p'}(Q)}+ k\Vert b_{n}(u_{0}^{n})\Vert_{L^{1}(\Omega)}
\end{aligned}
\end{equation}
where $C$ is a positive constant. We will use the properties of $\overline{T}_{k}$ ($\overline{T}_{k}\geq 0$, $\overline{T}_{k}(s)\geq |s|-1$, $\forall s\in\mathbb{R}$), $b_{n}$, $f_{n}$, $G_{n}$, $g_{n}$, the boundedness of $\mu_{n}$ in $L^{1}(Q)$  and $b_{n}(u_{0}^{n})$ in $L^{1}(\Omega)$ to have 
\begin{equation}\label{Eq 5.8}
b_{n}(u_{n})-g_{n}\text{   is bounded in   }L^{\infty}(0,T;L^{1}(\Omega))
\end{equation}
Using H\"older inequality and $(\ref{Eq 3.5})$, we deduce that $(\ref{Eq 5.7})$ implies 
\begin{equation}\label{Eq 5.9}
T_{k}(b_{n}(u_{n})-g_{n})\text{   is bounded in   }L^{p}(0,T;W^{1,p}_{0}(\Omega)), 
\end{equation}
Independently of $n$ for any $k\geq 0$.\\

Let us observe from (\cite{BM} and \cite{BMR}) that for any $S\in W^{2,\infty}(\mathbb{R})$ such that $S'$ has a compact support $(\text{supp}(S')\subset[-k,k])$
\begin{equation}\label{Eq 5.10}
 S(b_{n}(u_{n})-g_{n})\text{   is bounded in   }L^{p}(0,T;W^{1,p}_{0}(\Omega)),
\end{equation}
and 
\begin{equation}\label{Eq 5.11}
 (S(b_{n}(u_{n})-g_{n}))_{t}\text{   is bounded in   }L^{1}(Q)+L^{p'}(0,T;W^{-1,p'}(\Omega)).
\end{equation}
independently of $n$. In fact, thanks to $(\ref{Eq 5.9})$ and Stampacchia's theorem, we easily deduce $(\ref{Eq 5.10})$. To show that $(\ref{Eq 5.11})$ hold true, we multiply $(\ref{Eq 5.4})$ by $S'(b_{n}(u_{n})-g_{n})$ to obtain
\begin{equation}\label{Eq 5.12}
\begin{aligned}
(S(b_{n}(u_{n})-g_{n}))_{t}&=\text{div}(S(b_{n}(u_{n})-g_{n})a(t,x,\nabla u_{n}))\\
&-a(t,x,\nabla u_{n})\cdot\nabla S'(b_{n}(u_{n})-g_{n})+f_{n}S'(b_{n}(u_{n})-g_{n})\\
&-\text{div}(G_{n}S'(b_{n}(u_{n})-g_{n}))+G_{n}\cdot\nabla S(b_{n}(u_{n})-g_{n})\text{   in   }\mathcal{D}'(Q),
\end{aligned}
\end{equation}
as a consequencen each term in the right hand side of $(\ref{Eq 5.12})$ is bounded either in $L^{p'}(0,T;W^{-1,p'}(\Omega))$ or in $L^{1}(Q)$, we obtain $(\ref{Eq 5.11})$.\\

Moreover, arguing again as in \cite{BPR} (see also \cite{BM}, \cite{BMR} and \cite{BR}), there exists a measurable function $u$ such that $T_{k}(u)\in L^{p}(0,T;W^{1,p}_{0}(\Omega))$, $u$ belongs to $L^{\infty}(0,T;L^{1}(\Omega))$, and up to a subsequence, for any $k>0$ we have  
\begin{equation}\label{Eq 5.13}
\left\{
\begin{aligned}
&u_{n}\rightarrow u \text{ a.e. in }Q,\\
& T_{k}(u_{n})\rightharpoonup T_{k}(u)\text{ weakly in }L^{p}(0,T;W^{1,p}_{0}(\Omega)),\\
& b_{n}(u_{n})-g_{n}\rightarrow b(u)-g\text{ a.e. in }Q,\\
&T_{k}(b_{n}(u_{n})-g_{n})\rightharpoonup T_{k}(b(u)-g)\text{ weakly in }L^{p}(0,T;W^{1,p}_{0}(\Omega)), 
\end{aligned}
\right.
\end{equation}
as $n$ tends to $+\infty$.\\

\textit{Step.2} Estimates in $L^{1}(Q)$ on the energy term.\\
Let $\rho_{n}$ a sequence of mollifiers as in $(\ref{Eq 2.7})$ and $\mu$ a nonnegative measure such that 
$\mu_{n}(t,x)=\rho_{n}*\mu(t,x)$. Observe that, based on Lemma \ref{Lem 2.4} that $\mu_{n}$ is an equidiffuse sequence of measures. Moreover, there exists a sequence $\mu_{n}\in C^{\infty}(Q)$ such that 
\[\Vert \mu\Vert_{L^{1}(Q)}\leq \Vert\mu\Vert_{\mathcal{M}(Q)},\]
and 
\[\mu_{n}\rightarrow \mu \text{ tightly in }\mathcal{M}(Q). \]
Let us fix $\eta>0$ and define $S_{k,\eta}(s):\mathbb{R}\rightarrow\mathbb{R}$  and $h_{k,\eta}(s):\mathbb{R}\rightarrow\mathbb{R}$ by 

\begin{equation}\label{Eq 5.14}
 S_{k,\eta}(s)=  \begin{cases}
 1 & \text{ if }|s|\leq k \\
0 & \text{ if }|s|> k+\eta \\
\text{affine}& \text{ otherwise}
    \end{cases}
\quad \text{ and }\quad h_{k,\eta}(s) =  1-S_{k,\eta}(u_{n}), 
\end{equation}
 let us denote by $T_{k,\eta}:\mathbb{R}\rightarrow\mathbb{R}$ the primitive function of $S_{k,\eta}$, that is 
\[ T_{k,\eta}(s)=\int_{0}^{s}S_{k,\eta}(\sigma)d\sigma\]
Notice that $T_{k,\eta}(s)$ converges pointwise to $T_{k}(s)$ as $\eta$ goes to zero and using the admissible test function $h_{k,\eta}(b(u_{n}))$ in $(\ref{Eq 5.4})$ leads to 
\begin{equation}\label{Eq 5.15}
\begin{aligned}
&\int_{\Omega}\overline{h}_{k,\eta}(b(u_{n})(T))dx+\frac{1}{\eta}\int_{\lbrace k<u_{n}<k+\eta\rbrace}a(t,x,\nabla u_{n})\nabla h_{k,\eta}(b(u_{n}))\\
&=\int_{Q}h_{k,\eta}(b(u_{n}))\mu_{n}dx dt+\int_{\Omega}\overline{h}_{k,\eta}b(u_{0}^{n})dx, 
\end{aligned}
\end{equation}
where $\overline{h}_{k,\eta}(r)=\int_{0}^{r}h_{k,\eta}(s)ds\geq 0$. Hence, using $(\ref{Eq 5.2})$, $(\ref{Eq 5.3})$ and dropping a nonnegative term,
\begin{equation}\label{Eq 5.16}
\begin{aligned}
&\frac{1}{\eta}\int_{\lbrace k<b(u_{n})<k+\eta\rbrace}b'(u_{n})a(t,x,\nabla u_{n})\cdot\nabla u_{n}dx ds\\
&\leq \int_{\lbrace|b(u_{n})|>k\rbrace}|\mu_{n}|dx dt +\int_{\lbrace b(u_{0}^{n})>k\rbrace}|b(u_{0}^{n})|dx \leq C.
\end{aligned} 
\end{equation}
Thus, there exists a bounded Radon measures $\lambda_{k}^{n}$ such that, as $\eta$ tends to zero 
\begin{equation}\label{Eq 5.17}
\lambda_{k}^{n,\eta}=\frac{1}{\eta}a(t,x,\nabla u_{n})\cdot\nabla u_{n}\chi_{\lbrace k\leq b(u_{n})\leq k+\eta\rbrace}\rightharpoonup \lambda_{k}^{n} *\text{weakly in }\mathcal{M}(Q). 
\end{equation}
\\
\textit{Step.3} Equation for the truncations.\\
We are able to prove that $(\ref{Eq 3.9})$ holds true. To see that, we multiply $(\ref{Eq 5.4})$ by $S_{k,\eta}(b(u_{n}))\xi$ where $\xi\in C^{\infty}_{c}([0,T]\times\Omega)$ to obatin
\begin{equation}\label{Eq 5.18}
\begin{aligned}
& T_{k,\eta}(b(u_{n}))_{t}-\text{div}(S_{k,\eta}(b(u_{n}))a(t,x,\frac{1}{b'(u_{n})}\nabla T_{k,\eta}(b(u_{n}))))\\
&=\mu_{n}+(S_{k,\eta}(b(u_{n}))-1)\mu_{n}+\frac{1}{n}a(t,x,\nabla u_{n})\cdot\nabla u_{n}\chi_{\lbrace k<|b(u_{n})|<k+\eta\rbrace}\text{ in }\mathcal{D}'(Q).
\end{aligned}
\end{equation}
Passing to the limit in $(\ref{Eq 5.18})$ as $\eta$ tends to zero, and using the fact that $|S_{k,\eta}|\leq 1$ and $(\ref{Eq 5.17})$, we deduce 
\begin{equation}\label{Eq 5.19}
T_{k}(b(u_{n}))_{t}-\text{div}(a(t,x,\frac{1}{b'(u)}\nabla T_{k}(b(u_{n}))))=\mu_{n}-\mu_{n}\chi_{\lbrace |b(u_{n})|\leq k\rbrace}+\lambda_{k}^{n}\text{ in }\mathcal{D'}(Q).
\end{equation}
Now, using properties of the convolution $\rho_{n}*\mu$ and in  view of $(\ref{Eq 5.16})-(\ref{Eq 5.17})$, we deduce that $\Lambda_{k}^{n}=-\mu_{n}\chi_{\lbrace |b(u_{n})|<k\rbrace}+\lambda_{k}^{n}$ is bounded in $L^{1}(Q)$. Then there exists a bounded measures $\Lambda_{k}$ such that $(-\mu_{n}\chi_{\lbrace |b(u_{n})|<k\rbrace}+\lambda_{k}^{n})_{n}$ converges to $\Lambda_{k}$ *weakly in $\mathcal{M}(Q)$. Therefore, using results $(\ref{Eq 5.13})$ of \textit{Step.1} and $(\ref{Eq 5.19})$ we deduce that $u$ satisfies
\begin{equation}\label{Eq 5.20}
T_{k}(b(u))_{t}-\text{div}(a(t,x,\nabla u))=\mu+\Lambda_{k}\text{ in }\mathcal{D}'(Q).
\end{equation}
\\
\textit{Step.4} $u$ is a renormalized solution.\\
  In this step, $\Lambda_{k}$ is shown to satisfy $(\ref{Eq 3.8})$. From $(\ref{Eq 5.16})$ and $(\ref{Eq 5.17})$ we deduce
 \begin{equation}\label{Eq 5.21}
 \begin{aligned}
 &\Vert \Lambda_{k}^{n}\Vert_{L^{1}(Q)}=\Vert -\mu_{n}\chi_{\lbrace |b(u_{n})|>k\rbrace}+\lambda_{k}^{n}\Vert_{L^{1}(Q)}\\
 &\leq 2\int_{\lbrace |b(u_{n})|>k\rbrace}|\mu_{n}|dx dt+\int_{\lbrace |b(u_{0}^{n})|>k\rbrace}|b(u_{0}^{n})|dx.
 \end{aligned}
 \end{equation}
Since
\[ \Vert \lambda_{k}\Vert_{\mathcal{M}(Q)}\leq\underset{n\rightarrow +\infty}{\text{liminf }}\Vert \mu_{n}\chi_{\lbrace |b(u_{n})|>k\rbrace}+\lambda_{k}^{n}\Vert_{\mathcal{M}(Q)},\]
the sequence $(\mu_{n})$ is equidiffuse, and the function $b(u_{0}^{n})$ converges to $b(u_{0})$ strongly in $L^{1}(\Omega)$, we deduce from Proposition \ref{Prop 2.2} and $(\ref{Eq 5.21})$ that $\Vert \Lambda_{k}\Vert_{\mathcal{M}(Q)}$ tends to zero as $k$ tends to infinity, then we obtain $(\ref{Eq 3.8})$, and hence, $u$ is a renormalized solution.
\end{proof}

\section{Uniqueness of renormalized solution}\label{Sect. 6}
This section is devoted to establish the uniqueness of the renormalized solution.\\
As we already said, due to the presence of both the general monotone operator associated to $a$ and the nonlinearity of the term $b$, a standard approach (see for instance \cite{DPP}) does not apply here. To overcome this difficulty, we are going to exploit the idea of \cite{PPP2} for which the uniqueness result comes from the following comparaison principle.
\begin{theorem}\label{Thm 6.1}
Let $u_{1}, u_{2}$ be two renormalized solutions of problem $(\ref{Eq 3.1})$ with data $(b(u_{0}^{1}),\mu_{1})$ and $(b(u_{0}^{2}),\mu_{2})$ respectively. Then, we have
\begin{equation}\label{Eq 6.1}
 \int_{\Omega}(b(u_{1})-b(u_{2}))^{+}(t)dx \leq \Vert b(u_{0}^{1})-b(u_{0}^{2})\Vert_{L^{1}(\Omega)}+\Vert (\mu_{1}-\mu_{2})^{+}\Vert_{\mathcal{M}(Q)}
 \end{equation}
for almost every $t\in [0,T]$. In particular, if $b(u_{0}^{1})\leq b(u_{0}^{2})$ and $\mu_{1}\leq \mu_{2}$ (in the case of measures), we have $u_{1}\leq u_{2}$ a.e. in $Q$. As a consequence, there exists at lest one renormalized solution of problem $(\ref{Eq 3.1})$.
\end{theorem}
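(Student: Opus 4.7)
The strategy is an $L^1$-contraction argument obtained by subtracting the two renormalized formulations \eqref{Eq 3.9} and testing the resulting identity with a regularization of $\mathrm{sign}^+(T_k(b(u_1))-T_k(b(u_2)))\,\chi_{(0,\tau)}(t)$. Put $w_k:=T_k(b(u_1))-T_k(b(u_2))\in L^p(0,T;W^{1,p}_0(\Omega))$ and $w_{0,k}:=T_k(b(u_0^1))-T_k(b(u_0^2))$. Subtracting \eqref{Eq 3.9} applied to $u_1$ and to $u_2$ yields, for every $\varphi\in C^\infty_c([0,T]\times\Omega)$,
\[
-\int_Q w_k\,\varphi_t\,dxdt + \int_Q(a(t,x,\nabla u_1)-a(t,x,\nabla u_2))\cdot\nabla\varphi\,dxdt = \int_Q\varphi\,d(\mu_1-\mu_2) + \int_Q\varphi\,d(\lambda_k^1-\lambda_k^2) + \int_\Omega w_{0,k}\,\varphi(0,x)\,dx.
\]
After a standard Steklov (or Landes) regularization of $w_k$ in the time variable, I insert $\varphi=H_\delta(w_k)\,\psi_\tau(t)$ in this identity, where $H_\delta(s):=\min(s^+/\delta,1)$ is the piecewise linear approximation of $\chi_{\{s>0\}}$ and $\psi_\tau\in C^\infty_c([0,T))$ equals $1$ on $[0,\tau]$ and drops linearly to $0$ on $[\tau,\tau+\eta]$.

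Passing first in the time-regularization parameter and then in $\delta$, the parabolic term produces the boundary contribution $\int_\Omega w_k^+(\tau)\,dx-\int_\Omega w_{0,k}^+\,dx$. For the elliptic contribution I use, on the coincidence set $\{|b(u_1)|<k\}\cap\{|b(u_2)|<k\}$, the identity
\[
\nabla w_k = b'(u_2)(\nabla u_1-\nabla u_2) + (b'(u_1)-b'(u_2))\,\nabla u_1,
\]
so that $\frac{1}{\delta}\int_{\{0<w_k<\delta\}}(a(\nabla u_1)-a(\nabla u_2))\cdot\nabla w_k\,\psi_\tau$ is a nonnegative piece (via \eqref{Eq 3.4} and $b'\ge b_0>0$) plus a remainder; on the thin layer $\{0<w_k<\delta\}$ one has $0<u_1-u_2<\delta/b_0$ with $|u_i|\le b^{-1}(k)$, so uniform continuity of $b'$ on compacts combined with Young's inequality and the $L^p$-bound on $\nabla T_k(b(u_i))$ kills the remainder as $\delta\to 0$. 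The ``exit-layer'' contributions from $\{|b(u_1)|\ge k,\,|b(u_2)|<k\}\cap\{0<w_k<\delta\}$ and its symmetric counterpart are controlled by the truncation-energy decay \eqref{Eq 4.2} and absorbed in the $\lambda_k^i$-accounting. On the right-hand side, $\int_Q H_\delta(w_k)\psi_\tau\,d(\mu_1-\mu_2)\le\|(\mu_1-\mu_2)^+\|_{\mathcal{M}(Q)}$, $|\int_Q H_\delta(w_k)\psi_\tau\,d(\lambda_k^1-\lambda_k^2)|\le\|\lambda_k^1\|_{\mathcal{M}(Q)}+\|\lambda_k^2\|_{\mathcal{M}(Q)}$, and $\int_\Omega w_{0,k}^+\le\|b(u_0^1)-b(u_0^2)\|_{L^1(\Omega)}$. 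Dropping the nonnegative elliptic piece and sending $k\to\infty$ (where $\|\lambda_k^i\|_{\mathcal{M}(Q)}\to 0$ by \eqref{Eq 3.8} and $w_k^+(\tau)\to(b(u_1)-b(u_2))^+(\tau)$ a.e., with the $L^\infty(0,T;L^1(\Omega))$ control of $b(u_i)-g$ from \eqref{Eq 4.1} and Fatou) delivers \eqref{Eq 6.1}. The order-preservation case $b(u_0^1)\le b(u_0^2)$, $\mu_1\le\mu_2$ then forces $(b(u_1)-b(u_2))^+=0$, hence $u_1\le u_2$ by strict monotonicity of $b$; applying the estimate with equal data (in both orderings) gives uniqueness, which together with Theorem \ref{Thm 5.1} yields the final existence-and-uniqueness assertion.

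The principal obstacle is the mismatch between the gradient variable in the monotonicity hypothesis \eqref{Eq 3.4}, which is $\nabla u_i$, and the gradient naturally appearing in $\nabla H_\delta(w_k)$, which carries the weight $b'(u_i)$. The decomposition of $\nabla w_k$ above is the mechanism that turns this into a usable monotonicity inequality, and this is precisely where the strict positivity $b'\ge b_0>0$ from \eqref{Eq 3.5} becomes essential: it converts smallness of $b(u_1)-b(u_2)$ into smallness of $u_1-u_2$ on the layer $\{0<w_k<\delta\}$ so that the continuity of $b'$ can eliminate the bad remainder. The time-derivative regularization and the $k\to\infty$ passage via \eqref{Eq 3.8} are then routine; in particular the exit-layer bookkeeping, while technically the most tedious part, is essentially standard once one has \eqref{Eq 4.2} and Proposition \ref{Prop 2.2}.
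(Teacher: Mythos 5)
Your proposal follows essentially the same route as the paper's proof: subtract the two renormalized formulations \eqref{Eq 3.9}, test with a Steklov-type time regularization of $\frac{1}{\epsilon}T_{\epsilon}\bigl(T_{k}(b(u_{1}))-T_{k}(b(u_{2}))\bigr)^{+}$ multiplied by a time cutoff, bound the measure terms by $\Vert(\mu_{1}-\mu_{2})^{+}\Vert_{\mathcal{M}(Q)}+\Vert\lambda_{k,1}\Vert_{\mathcal{M}(Q)}+\Vert\lambda_{k,2}\Vert_{\mathcal{M}(Q)}$, pass successively in the regularization parameter, in $\epsilon$, and in $k$ via \eqref{Eq 3.8}, and finally localize in time with $\zeta=1-\frac{1}{\epsilon}T_{\epsilon}(t-\tau)^{+}$. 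The only substantive divergence is the elliptic term, which the paper drops by directly invoking \eqref{Eq 3.4} while you correctly flag the $b'(u_{1})$-versus-$b'(u_{2})$ weight mismatch and treat it via the decomposition $\nabla w_{k}=b'(u_{2})(\nabla u_{1}-\nabla u_{2})+(b'(u_{1})-b'(u_{2}))\nabla u_{1}$; this is more careful than the paper, though note that your remainder estimate, of the form $\omega(\delta/b_{0})/\delta$ times a vanishing layer integral, is only conclusive when $b'$ is Lipschitz (or when the decay of the layer integral is quantified), since for merely continuous $b'$ the product is indeterminate.
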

\begin{proof}
Let $\lambda_{k_{1}},\lambda_{k_{2}}$ be the measures given by Definition \ref{Def 3.1} corresponding to $b(u_{1}), b(u_{2})$, we can extend the class of test functions 
\[
\begin{aligned}
&-\int_{Q}(T_{k}(b(u_{1}))-T_{k}(b(u_{2}))v_{t}dx dt +\int_{Q}(a(t,x,\nabla u_{1})-a(t,x,\nabla u_{2}))\cdot\nabla v dx dt\\
&=\int_{Q}vd(\mu_{1}-\mu_{2})+\int_{Q}v d\lambda_{k,1}-\int_{Q}v d\lambda_{k,2}+\int_{\Omega}(T_{k}(b(u_{0}^{1}))-T_{k}(b(u_{0}^{2})))v(0,x)dx,
\end{aligned}
\]
for every $v\in W\cap L^{\infty}(Q)$, such that $v(T)=0$. Consider the function
\[\omega_{h}(t,x)=\frac{1}{h}\int_{t}^{t+h}\frac{1}{\epsilon}T_{\epsilon}(T_{k}(b(u_{1}))-T_{k}(b(u_{2})))^{+}(s,x)ds. \]
Given $\zeta\in C^{\infty}_{c}([0,T)),\zeta\geq 0$, take $v=\omega_{h}\zeta$ as test function. Observe that both $\omega_{h}$ and $(\omega_{h})_{t}$ belong to $L^{p}(0,T;V)\cap L^{\infty}(Q)$ for $h>0$ sufficiently small, hence $\omega_{h}\in W\cap L^{\infty}(Q)$. Moreover, we have
\[ \omega_{h}\rightarrow \frac{1}{\epsilon}T_{\epsilon}(T_{k}(b(u_{1}))-T_{k}(b(u_{2})))^{+}\text{ strongly in }L^{p}(0,T;W^{1,p}_{0}(\Omega)).\]
Using that $0\leq \omega_{h}\leq 1$ almost everywhere , hence $0\leq \omega_{h}\leq 1$ $\text{cap}$-quasi-everywhere (see \cite{DPP}), we have
\begin{equation}\label{Eq 6.2}
\begin{aligned}
&-\int_{Q}[(T_{k}(b(u_{1}))-T_{k}(b(u_{2})) -(T_{k}(b(u_{0}^{1}))-T_{k}(b(u_{0}^{2}))](\omega_{h}\zeta)_{t}dx dt\\
&+\int_{Q}(a(t,x,\nabla u_{1})-a(t,x,\nabla u_{2}))\cdot\nabla \omega_{h}\zeta dx dt\\
&\leq \Vert \zeta\Vert_{\infty}(\Vert(\mu_{1}-\mu_{2})^{+}\Vert_{\mathcal{M}(Q)}+\Vert \lambda_{k,1}\Vert_{\mathcal{M}(Q)}+\Vert \lambda_{k,2}\Vert_{\mathcal{M}(Q)}).
\end{aligned}
\end{equation}
Using the monotonicity of $T_{\epsilon}(s)$, we have (see \cite{BP1}, Lemma 2.1)
\[
\begin{aligned}
&\underset{h\rightarrow 0}{\text{liminf}}\lbrace -\int_{Q}[(T_{k}(b(u_{1}))-T_{k}(b(u_{2}))-(T_{k}(b(u_{0}^{1}))-T_{k}(b(u_{0}^{2})))](\omega_{h}\zeta_{t})dx dt\rbrace\\
&\geq -\int_{Q}\tilde{\Theta}_{\epsilon}(T_{k}(b(u_{1}))\zeta_{t}dx dt -\int_{\Omega}\tilde{\Theta}_{\epsilon}(T_{k}(b(u_{0}^{1}))-T_{k}(b(u_{0}^{2}))\zeta(0)dx
\end{aligned}
\]
where $\tilde{\Theta}_{\epsilon}(s)=\int_{0}^{s}\frac{1}{\epsilon}T_{\epsilon}(r)^{+}dr$. Therefore, letting $h\rightarrow 0$ in $(\ref{Eq 6.2})$, we obtain 
\[
\begin{aligned}
&-\int_{Q}\tilde{\Theta}_{\epsilon}(T_{k}(b(u_{1}))-T_{k}(b(u_{2}))\zeta_{t}dx dt\\
&+\frac{1}{\epsilon}\int_{Q}(a(t,x,\nabla u_{1})-a(t,x,\nabla u_{2}))\cdot\nabla T_{\epsilon}(T_{k}(b(u_{1}))-T_{k}(b(u_{2}))\zeta dx dt\\
&\leq\int_{\Omega}\tilde{\Theta}_{\epsilon}(T_{k}(b(u_{0}^{1}))-T_{k}(b(u_{0}^{2}))\zeta(0)dx+\Vert \zeta\Vert_{\infty}(\Vert(\mu_{1}-\mu_{2})^{+}\Vert_{\mathcal{M}(Q)}+\Vert \lambda_{k,1}\Vert_{\mathcal{M}(Q)}+\Vert \lambda_{k,2}\Vert_{\mathcal{M}(Q)}).
\end{aligned}
\]
Using $(\ref{Eq 3.4})$ and letting $\epsilon\rightarrow 0$, we deduce
\[
\begin{aligned}
-\int_{Q}(T_{k}(b(u_{1}))-T_{k}(b(u_{2}))^{+}\zeta_{t}dx dt &\leq \int_{\Omega}(T_{k}(b(u_{0}^{1}))-T_{k}(b(u_{0}^{2}))^{+}\zeta(0)dx\\
&+\Vert\zeta\Vert_{\infty}(\Vert(\mu_{1}-\mu_{2})^{+}\Vert_{\mathcal{M}(Q)}+\Vert\lambda_{k,1}\Vert_{\mathcal{M}(Q)}+\Vert\lambda_{k,2}\Vert_{\mathcal{M}(Q)}).
\end{aligned}
\]
and letting $k\rightarrow\infty$, we obtain, thanks to $(\ref{Eq 3.8})$,
\[-\int_{Q}(b(u_{1})-b(u_{2}))^{+}\zeta_{t}dx dt\leq\Vert\zeta\Vert_{\infty}(\Vert(b(u_{0}^{1})-b(u_{0}^{2})^{+}\Vert_{L^{1}(\Omega)}+\Vert(\mu_{1}-\mu_{2})^{+}\Vert_{\mathcal{M}(Q)}) \]
for every nonnegative $\zeta\in C^{\infty}_{c}([0,T)$. Of course, the same inequality holds for any $\zeta\in W^{1,\infty}(0,T)$ with compact support in $[0,T)$. Take then $\zeta(t)=1-\frac{1}{\epsilon}T_{\epsilon}(t-\tau)^{+}$, where $\tau\in (0,T)$; since $b(u_{1}),b(u_{2})\in L^{\infty}(0,T;L^{1}(\Omega))$, by letting $\epsilon\rightarrow 0$, we have
\[-\int_{Q}(b(u_{1})-b(u_{2}))^{+}\zeta_{t}dx dt=\frac{1}{\epsilon}\int_{\tau}^{\tau+\epsilon}\int_{\Omega}(b(u_{1})-b(u_{2}))^{+}dx dt\rightarrow\int_{\Omega}(b(u_{1})-b(u_{2}))^{+}(\tau)dx \]
for almost every $\tau\in (0,T)$. Using in the right-hand side that $\Vert\zeta\Vert_{\infty}\leq 1$, we get $(\ref{Eq 6.1})$.
\end{proof}
\section{Appendix}\label{Sect. 7}

Here we proof the extension of Proposition \ref{Prop 2.2}.
\begin{proof} We still use the notations introduced in Section 2, in particular, we consider the condition $p>\frac{2N+1}{N+1}$, for simplicity we assume in addition that $\mu\geq 0$ and $b(u_{0})\geq 0$, hence, we have $u\geq 0$ (th case $\mu\leq 0$ can be obtained similarly). Actually, the proof will be split into three parts, we begin with the first one to obtain the basic estimates. \\

\textit{Step.1} Estimates of $T_{k}(b(u))$ in the space $L^{\infty}(0,T;L^{2}(\Omega))\cap L^{p}(0,T;W^{1,p}_{0}(\Omega))$.\\
For every $\tau\in\mathbb{R}$, let 
\[\overline{T_{k}}(r)=\int_{0}^{r}T_{k}(s)ds. \]
We recall that if $u\in W$, then $u$ is a weak solution of $(\ref{Eq 1.1})$ if 
\begin{equation}\label{Eq 7.1}
\int_{0}^{t}\langle b(u)_{t},v\rangle dt+\int_{Q}|\nabla u|^{p-2}\nabla u\cdot\nabla v dx dt=\int_{0}^{t}\langle \mu,v\rangle dt,\quad\forall v\in W,
\end{equation}
where $\langle\cdot,\cdot\rangle$ denotes the duality between $V$ and $V'$.\\
Note that, if $\mu\in\mathcal{M}(Q)\cap L^{p'}(0,T;W^{-1,p'}(\Omega))$, then $(\ref{Eq 7.1})$ holds for every $v\in L^{p}(0,T;V)$, and we have
\begin{equation}\label{Eq 7.2}
\int_{s}^{t}\langle b(u)_{t},\psi'(u)\rangle dt =\int_{\Omega}\psi(b(u)(t)) dx -\int_{\Omega}\psi(b(u)(s))dx,
\end{equation}
for every $s,t\in [0,T]$ and every function $\psi:\mathbb{R}\rightarrow\mathbb{R}$ such that $\psi'$ is Lipschitz continuous and $\psi'(0)=0$. Now we choose as test function $T_{k}(b(u))$ in $(\ref{Eq 7.1})$ and using $(\ref{Eq 7.2})$ with $\psi=\overline{T_{k}}$, $s=0$ and $t=r$, we have
\[ \int_{\Omega}\overline{T}_{k}(b(u))(r)dx +\int_{0}^{r}\int_{\Omega}a(t,x,\nabla u)\cdot\nabla T_{k}(b(u))dx dt\leq k\Vert\mu\Vert_{\mathcal{M}(Q)}+\int_{\Omega}\overline{T}_{k}(b(u_{0}))dx.\]
Let $E_{k}=\lbrace(x,t):|b(u)|\leq k\rbrace$, and observing $\frac{T_{k}(s)^{2}}{2}\leq\overline{T}_{k}(s)\leq k|s|$, $\forall s\in\mathbb{R}$, we have
\begin{equation}\label{Eq 7.3}
\int_{\Omega}\frac{|T_{k}(b(u))(r)|^{2}}{2}dx +\int_{0}^{r}\int_{\Omega}\chi_{E_{k}}b'(u)a(t,x,\nabla u)\cdot\nabla u \;dx dt\leq k(\Vert \mu\Vert_{\mathcal{M}(Q)}+\Vert b(u_{0})\Vert_{L^{1}(\Omega)}),
\end{equation}
for any $r\in [0,T]$. In particulier, we deduce
\begin{equation}\label{Eq 7.4}
\Vert T_{k}(b(u))\Vert^{2}_{L^{\infty}(0,T;L^{2}(\Omega))}\leq 2kM,
\end{equation}
and from assumption (3.2), we have
\[\alpha\int_{E_{k}}b'(u)|\nabla u|^{p}dx dt\leq \int_{0}^{r}\int_{\Omega}\chi_{E_{k}}b'(u)a(t,x,\nabla u)\nabla u\leq kM. \]
Note that
\[
\begin{aligned}
\int_{E_{k}}b'(u)|\nabla u|^{p}dx dt&=\int_{E_{k}}b'(u)|b'^{-1}\nabla b(u)|^{p}dx\\
&=\int_{E_{k}}\frac{1}{(b')^{p-1}}|\nabla b(u)|^{p}dx\\
&\geq \int_{0}^{r}\int_{\Omega}\frac{1}{(b_{1})^{p-1}}|\nabla T_{k}b(u)|^{p}dx.
\end{aligned}
\]
Then,
\begin{equation}\label{Eq 7.5}
\Vert T_{k}(b(u))\Vert^{p}_{L^{p}(0,T;W^{1,p}_{0}()\Omega)}\leq CkM, 
\end{equation}
where 
\begin{equation}\label{Eq 7.6}
 C=\frac{b_{1}^{p-1}}{\alpha}\quad\text{and}\quad M=\Vert \mu\Vert_{\mathcal{M}(Q)}+\Vert b(u_{0})\Vert_{L^{1}(\Omega)}.
 \end{equation}\\
 
\textit{Step.2} Estimates in $W$.\\
Note that in virtue of \cite{P} (see also \cite{L}), any function $z\in L^{\infty}(0,T;L^{2}(\Omega))\cap L^{p}(0,T;W^{1,p}_{0}(\Omega))$ is a solution of the backward problem
\begin{equation}\label{Eq 7.7}
\left\{
\begin{aligned}
&-z_{t}-\Delta_{p}z=-2\Delta_{p}T_{k}(b(u))\quad\text{ in }Q,\\
&z=T_{k}(b(u))\quad\text{ on }\lbrace T\rbrace\times\Omega,\\
&z=0\quad\text{ on }(0,T)\times\partial\Omega.
\end{aligned}
\right.
\end{equation}
We can choose $z$ as test function in $(\ref{Eq 7.7})$ and integrate $t$ between $\tau$ and $T$. Since we have from  Young's inequality
\[
\begin{aligned}
\int_{\Omega}\frac{[z(\tau)]^{2}}{2}dx +\frac{1}{2}\int_{\tau}^{T}\int_{\Omega} b'(u)|\nabla z|^{p}dx dt&\leq \int_{\Omega}\frac{[T_{k}(b(u))(T)]^{2}}{2}dx\\
&+C\int_{\tau}^{T}\int_{\Omega}b'(u)|\nabla u|^{p}dx dt
\end{aligned}
\]
we deduce, using also $(\ref{Eq 7.2})$ with $r=T$
\[\int_{\Omega}\frac{[z(\tau)]^{2}}{2}dx +\frac{1}{2}\int_{\tau}^{T}\int_{\Omega}b'(u)|\nabla z|^{p}dx dt\leq Ck(\Vert\mu\Vert_{\mathcal{M}(Q)}+\Vert b(u_{0})\Vert_{L^{1}(\Omega)})= CkM,\]
this implies the estimate for $z$
\begin{equation}\label{Eq 7.8}
\Vert z\Vert_{L^{\infty}(0,T;L^{2}(\Omega))}^{2}+\Vert z\Vert_{L^{p}(0,T;W^{1,p}_{0}(\Omega))}^{p}\leq CkM.
\end{equation}
Since by the definition of $V$ (i.e. $V=W^{1,p}_{0}(\Omega)\cap L^{2}(\Omega)$), we have
\[\Vert z\Vert_{L^{p}(0,T;V)}^{p}\leq C(\Vert z\Vert_{L^{p}(0,T;W^{1,p}_{0}(\Omega))}^{p}+\Vert z\Vert_{L^{p}(0,T;L^{2}(\Omega))}^{p}), \]
Then we have from $(\ref{Eq 7.8})$ that
\begin{equation}\label{Eq 7.9}
\Vert z\Vert_{L^{p}(0,T;V)}\leq C[(kM)^{\frac{1}{p}}+(kM)^{\frac{1}{2}}],
\end{equation}
using the  equation  $(\ref{Eq 7.7})$, we obtain
\[ \Vert z_{t}\Vert_{L^{p'}(0,T;W^{-1,p'}(\Omega))}\leq C(\Vert z\Vert_{L^{p}(0,T;W^{1,p}_{0}(\Omega))}^{p-1}+\Vert T_{k}(b(u))\Vert_{L^{p}(0,T;W^{1,p}_{0}(\Omega))}^{p-1}),\]
hence, we get from $(\ref{Eq 7.5})$ and $(\ref{Eq 7.8})$
\begin{equation}\label{Eq 7.10}
\Vert z\Vert_{L^{p'}(0,T;W^{-1,p'}(\Omega))}\leq C(kM)^{\frac{1}{p'}}.
\end{equation}
Putting together $(\ref{Eq 7.9})$ and $(\ref{Eq 7.10})$, we have the result 
\begin{equation}\label{Eq 7.11}
\Vert z\Vert_{W}\leq C\text{max}\lbrace (kM)^{\frac{1}{p}}, (kM)^{\frac{1}{p'}}\rbrace,
\end{equation}
where $M$ is the constant defined in $(\ref{Eq 7.6})$.\\

\textit{Step.3} Proof completed.\\
Obtaining the energy inequality $(\ref{Eq 7.11})$ was the main step in order to prove the estimate of the capacity $(\ref{Eq 2.6})$. It should be noticed that we  assume that $\mu\geq 0$ to obtain $b(u)_{t}-\Delta_{p}u\geq 0$, $u\geq 0$ in $Q$ and the following inequality holds 
\begin{equation}\label{Eq 7.12}
 (T_{k}(b(u)))_{t}-\Delta_{p}T_{k}(b(u))\geq 0.
\end{equation}
Indeed, one can choose $T'_{k,\eta}(b(u))\varphi$ in $(\ref{Eq 7.1})$ (where $\varphi\in C^{\infty}_{c}(Q)$ and $\varphi\geq 0$), using this time $\mu\geq 0$, with the fact that $T_{k,\eta}(s)$ is concave for $s\geq 0$,
\[ -\int_{0}^{T}\varphi_{t}T_{k,\eta}(b(u))dt+\int_{Q}b'(u)|\nabla u|^{p-2}\nabla u\cdot\nabla\varphi S_{k,\eta}(u)dx dt\geq 0,\]
which yields $(\ref{Eq 7.12})$ as $\eta$ goes to $0$.\\
Therefore, the combinaison of $(\ref{Eq 7.7})$ and $(\ref{Eq 7.12})$ gives
\begin{equation}\label{Eq 7.13}
-z_{t}-\Delta_{p}z\geq -(T_{k}(b(u)))_{t}-\Delta_{p}T_{k}(b(u)).
\end{equation}
We are left to prove that $z\geq T_{k}(b(u))$ a.e. in $Q$ (in particular, $z\geq k$ a.e. on $\lbrace b(u)>k\rbrace$). This is done by means of  $(z-T_{k}(b(u)))^{-}$ in  both sides of $(\ref{Eq 7.13})$, and since $z$ and $T_{k}(u)$ belongs to $L^{p}(0,T;W^{1,p}_{0}(\Omega))$. Indeed we have $u$ has a unique $\text{cap}_{p}-$quasi continuous representative ( recall that, $u$ belongs to $W$); hence, the set $\lbrace b(u)>k\rbrace$ is $\text{cap}-$quasi open, and its capacity can be estimated with  $(\ref{Eq 2.1})$. So that
\[ \text{cap}_{p}(\lbrace |b(u)|>k\rbrace)\leq\Vert\frac{z}{k}\Vert_{W}.\]
Using $(\ref{Eq 7.11})$ and by means that the result is also true for $\mu\leq 0$, we conclude the extension of $(\ref{Eq 2.6})$.
\end{proof}

\end{document}